\theoremstyle{plain}
\newtheorem{theorem}{\bf Theorem}[section]
\newtheorem{lemma}[theorem]{\bf Lemma}
\newtheorem{corollary}[theorem]{\bf Corollary}
\newtheorem{proposition}[theorem]{\bf Proposition}
\theoremstyle{definition}
\newtheorem{definition}[theorem]{\bf Definition}
\newtheorem{remark}[theorem]{\bf Remark}
\numberwithin{equation}{section}
\numberwithin{equation}{section}
\numberwithin{equation}{section}
\numberwithin{equation}{section}
\numberwithin{equation}{section}
\numberwithin{equation}{section}
\numberwithin{equation}{section}
\numberwithin{equation}{section}
\numberwithin{equation}{section}
\numberwithin{equation}{section}
\numberwithin{equation}{section}
\numberwithin{equation}{section} \setcounter{page}{1}
\newcommand{\R}{\mathbb{R}}
\newcommand{\N}{\mathbb{N}}
\newcommand{\Z}{\mathbb{Z}}
\numberwithin{equation}{section}
\numberwithin{equation}{section}
\numberwithin{equation}{section}
\numberwithin{equation}{section}
\numberwithin{equation}{section}
\numberwithin{equation}{section}
\numberwithin{equation}{section}
\numberwithin{equation}{section}
\numberwithin{equation}{section}
\numberwithin{equation}{section}
\numberwithin{equation}{section}
\numberwithin{equation}{section} \setcounter{page}{1}
\def\rp#1{^{\overline{#1}}}
\def\Ceil#1{\left\lceil #1 \right\rceil}
\def\NewTheorem#1{%
  \newaliascnt{#1}{equation}
  \newtheorem{#1}[#1]{#1}
  \aliascntresetthe{#1}
  \expandafter\def\csname #1autorefname\endcsname{#1}
}
\def\Sum{\sum\limits}
\newcommand{\C}{\mathbb{C}}
\def\Ceil#1{\left\lceil #1 \right\rceil}
\theoremstyle{plain}
\begin{document}


\author[Areeba Ikram]{Areeba Ikram\\
Colorado School of Mines\\ Department of Applied Mathematics \& Statistics\\
1500 Illinois St., Golden, CO}


\title[Lyapunov Inequalities for Nabla Caputo BVPs]{Lyapunov Inequalities for Nabla Caputo Boundary Value Problems}
\thanks{CONTACT Areeba Ikram. Email: aikram@mines.edu}
\maketitle

\begin{center}
This paper is dedicated to my PhD advisor, Allan C. Peterson.
\end{center}

\begin{abstract}
We will establish uniqueness of solutions to boundary value problems involving the nabla Caputo fractional difference under two-point boundary conditions and give an explicit expression for the Green's functions for these problems. Using the Green's functions for specific cases of these boundary value problems, we will then develop Lyapunov inequalities for certain nabla Caputo BVPs.

\noindent {\underline{\textbf{Keywords}}}: Lyapunov inequality; boundary value problem; Green's function; nabla difference; Caputo fractional; difference equation\\
\noindent {\underline{\textbf{2010 AMS Subject Classification}}}: 39A10; 39A70.

\end{abstract}


\section{Introduction}


The original Lyapunov inequality from ordinary differential equations includes the following result.

\begin{theorem}\cite{l, kp} \label{lyap} Let $q:[a,b] \to \R$ be continuous. If the boundary value problem 
\begin{equation*} 
\begin{cases}
x'' + q(t) x = 0, & t \in [a,b]\\
 x(a) = x(b) = 0
 \end{cases}
 \end{equation*} has a nontrivial solution,  then $\int_a^b |q(t)| dt > \frac{4}{b-a}.$  
\end{theorem}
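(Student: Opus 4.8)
The plan is to combine an energy identity with two applications of the Cauchy--Schwarz inequality, and then to extract the strict inequality from the regularity of the solution. Since $q$ is continuous, any nontrivial solution $x$ is twice continuously differentiable on $[a,b]$; and because $x(a)=x(b)=0$ while $x\not\equiv 0$, the quantity $M:=\max_{t\in[a,b]}|x(t)|$ is positive and is attained at some interior point $c\in(a,b)$. These two facts, positivity of $M$ and $C^2$-regularity, are the only inputs the argument needs.

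First I would derive an energy identity. Multiplying the equation $x''+q(t)x=0$ by $x$, integrating over $[a,b]$, and integrating by parts once while using the boundary conditions to kill the boundary term gives
\[
\int_a^b \bigl(x'(t)\bigr)^2\,dt = \int_a^b q(t)\,x(t)^2\,dt \le \int_a^b |q(t)|\,x(t)^2\,dt \le M^2\int_a^b |q(t)|\,dt .
\]
So it suffices to bound the left-hand side below by an explicit multiple of $M^2$.

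For that lower bound I would use the endpoint conditions together with Cauchy--Schwarz on each side of $c$. Writing $M=\bigl|\int_a^c x'(t)\,dt\bigr|$ and $M=\bigl|\int_c^b x'(t)\,dt\bigr|$ and applying Cauchy--Schwarz yields $\frac{M^2}{c-a}\le \int_a^c (x')^2\,dt$ and $\frac{M^2}{b-c}\le \int_c^b (x')^2\,dt$. Adding these and combining with the energy identity gives
\[
\frac{1}{c-a}+\frac{1}{b-c} \le \int_a^b |q(t)|\,dt .
\]
Since $\frac{1}{c-a}+\frac{1}{b-c}=\frac{b-a}{(c-a)(b-c)}$ and $(c-a)(b-c)\le\bigl(\frac{b-a}{2}\bigr)^2$, the left-hand side is at least $\frac{4}{b-a}$, which already proves the inequality in non-strict form.

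The main obstacle, and the step I expect to be delicate, is upgrading $\ge$ to the strict $>$ in the statement. I would do this by tracing the equality cases through the chain above: equality in $(c-a)(b-c)\le\bigl(\frac{b-a}{2}\bigr)^2$ forces $c=\frac{a+b}{2}$, while equality in the two Cauchy--Schwarz estimates forces $x'$ to be constant and of one sign on each of $[a,c]$ and $[c,b]$, so $x$ would have to be the piecewise-linear ``tent'' function peaking at $c$. Such a function has a corner at $c$ and is not even $C^1$ there, contradicting that $x$ is a $C^2$ solution of the equation; hence at least one inequality in the chain is strict and $\int_a^b |q(t)|\,dt > \frac{4}{b-a}$. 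I would also remark that the sharp constant $\frac{b-a}{4}$ is exactly the maximum value of the Dirichlet Green's function for $-x''$ on $[a,b]$, which previews the central role Green's functions will play in the nabla Caputo setting developed below.
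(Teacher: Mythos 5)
Your proof is correct: the energy identity $\int_a^b (x')^2\,dt=\int_a^b q(t)x^2\,dt$ is legitimate because $x''=-qx$ is continuous and the boundary terms vanish; the two Cauchy--Schwarz estimates on $[a,c]$ and $[c,b]$ together with $(c-a)(b-c)\le\left(\tfrac{b-a}{2}\right)^2$ give the non-strict bound; and your equality-case analysis does close the delicate step, since equality would force $x$ to be a piecewise-linear tent with a genuine corner at $c$ (the two limiting slopes are nonzero and of opposite sign), contradicting $x\in C^1$. But this is a genuinely different route from the one the paper is organized around. The paper never proves Theorem \ref{lyap} at all---it quotes it from \cite{l, kp}---and the method it announces in the introduction and then executes in the discrete fractional setting is the Green's function method: convert the BVP to the integral equation $x(t)=\int_a^b G(t,s)q(s)x(s)\,ds$, where $G$ is the Dirichlet Green's function of $-x''$, bound $\max_t|x(t)|$ using $0\le G(t,s)\le\tfrac{b-a}{4}$, and cancel the maximum; this is precisely the scheme of Lemma \ref{intabsGbounds} and Theorem \ref{higherordersubsituting}, and your closing remark shows you recognize that the sharp constant is $\max G$. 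The trade-off is this: your variational argument is more elementary (no Green's function needs to be computed) and makes strictness transparent via equality cases of Cauchy--Schwarz, but it relies on integration by parts and the self-adjoint structure of $x''$ with symmetric boundary conditions, which do not carry over to the nabla Caputo operator $\nabla_{a*}^{\nu}$ studied here; the Green's function approach needs only pointwise bounds on the kernel, which is exactly why it is the one that generalizes to the paper's setting.
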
 

In recent years, due to their many applications for studying solutions to boundary value problems, Lyapunov inqualities have been extended and generalized to BVPs involving fractional operators under various boundary conditions.  Lyapunov inequalities can be used to give existence-uniqueness results for certain nonhomogeneous boundary value problems, study the zeros of solutions, and obtain bounds on eigenvalues in certain eigenvalue problems.  

In the ordinary differential equations case, Lyapunov inequalities for third order linear differential equations with three-point boundary conditions are considered in \cite{fahri}. In fractional order differential equations, a number of recent developments similar to the original Lyapunov inequality have been made; for example, see \cite{dk1, dk2, f1, jrs, js, ma}. Lyapunov inequalities involving the Caputo fractional derivative are studied in \cite{f1,jrs, js, ma}. Fractional equations of order $\alpha$, where $1 < \alpha \leq 2$, are considered in \cite{f1}, \cite{jrs}, and \cite{js} under conjugate, Robin, and  Sturm-Liouville boundary conditions, respectively.  Additionally, \cite{ma} involves fractional equations of order $\alpha$, where $2 < \alpha \leq 3$, and applications of Lyapunov inequalities to a Mittag-Leffler function and an eigenvalue problem are discussed. Boundary value problems involving  the continuous Riemann-Liouville fractional operator of order $\alpha$, where $2 < \alpha \leq 3$, as well as extensions including fractional BVPs with solutions defined on multivariate domains are considered in  \cite{dk1} and \cite{dk2}. A reduction of order technique is used to obtain Lyapunov inequalities in \cite{dk2}, which we will adapt and extend to nabla Caputo BVPs of higher order in this paper.

For fractional difference equations, Lyapunov-type inequalities for two-point conjugate and right-focal boundary value problems involving a delta fractional difference equation of order $\alpha$, where $1 < \alpha \leq 2$, are considered in \cite{f2}.  In \cite{deltaLyap}, Lyapunov inequalities for delta fractional equations are used to study disconjugacy and oscillation of solutions. In the nabla Riemann-Liouville case, a Lyapunov inequality for a boundary value problem of order $\alpha$, where $2 < \alpha \leq 3$, is given in \cite{rightfi}.  Much remains to be explored in Lyapunov inequalities for fractional difference operators, and we will develop some results for the nabla Caputo case in Section 4.

Green's functions play an essential role in deriving Lyapunov inequalities for boundary value problems. A general method of obtaining Lyapunov inequalities involves converting a given boundary value problem to an equivalent integral equation involving an appropriate Green's function and then using bounds on the Green's function \cite{sequentialFerr}.  

This paper is organized as follows. In Section 2, we will give preliminary definitions and results involving the nabla Caputo fractional difference. In Section 3, we will develop Green's functions for BVPs involving the nabla Caputo difference operator. The main results of Section 3 are given in Theorems \ref{GreensfunctionthmkNk} and \ref{Gdeterminant}, which give an explicit form for the unique solutions to the given BVPs. In Section 4, we will develop Lyapunov inequality results using a particular case of the Green's function results from Section 3. The main result of Section 4 is given in Theorem \ref{higherordersubsituting}, which gives Lyapunov inequalities obtained by using the earlier mentioned reduction of order technique. We will end with a corollary which gives sufficient conditions for certain nonhomogenous BVPs to have unique solutions.

\section{Preliminaries}
In this paper, functions will be defined on either of the domains $\N_a:= \{a, a+1, a+2, \hdots \} \text{ or } \N_a^b := \{a, a+1, \hdots, b\},$
where $a, b \in \R$ such that $b-a$ is a positive integer. We will let $\N:=\N_1$. For more details on the background presented in this section, see \cite{gp}.

\begin{definition} The \textbf{nabla difference} of a function $f: \N_{a} \to \R$ is defined by \[ \nabla f(t) := f(t) - f(t-1), \,\ \text{ for } t \in \N_{a+1}.\]  We define nabla differences of any higher order $N \in \N$ recursively; i.e., $\nabla^N f(t):= \nabla (\nabla^{N-1} f) (t), \,\ \text{ for } t \in \N_{a+N}.$ Additionally, we take by convention $\nabla^0 f(t):= f(t).$
\end{definition}

The next proposition gives a binomial formula for the $N$-th order nabla difference.

\begin{proposition} \label{binomialnablaexpansion} Let $f: \N_a \to \R$ and $N \in \N$. Then, 
$$\nabla^N f(t) = \Sum_{i=0}^N(-1)^i {N\choose{i}} f(t-i),$$ for $t \in \N_{a+N}$.
\end{proposition}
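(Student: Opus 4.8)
The plan is to prove this by induction on $N$, using the recursive definition of the higher-order nabla difference together with Pascal's rule for binomial coefficients. The base case $N=1$ is immediate: the right-hand side collapses to $\binom{1}{0}f(t) - \binom{1}{1}f(t-1) = f(t) - f(t-1)$, which is exactly $\nabla f(t)$ for $t \in \N_{a+1}$. (One could equally begin from $N=0$, where the convention $\nabla^0 f = f$ matches the single-term sum $\binom{0}{0}f(t)$.)

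For the inductive step, I would assume the formula holds at level $N$ and compute $\nabla^{N+1} f(t) = \nabla^N f(t) - \nabla^N f(t-1)$ directly from the definition. This is valid for $t \in \N_{a+N+1}$, which is precisely the range ensuring that both $t$ and $t-1$ lie in $\N_{a+N}$, where the inductive hypothesis applies. Substituting the hypothesis into both terms gives
$$\nabla^{N+1} f(t) = \Sum_{i=0}^N (-1)^i \binom{N}{i} f(t-i) - \Sum_{i=0}^N (-1)^i \binom{N}{i} f(t-1-i).$$
The key manipulation is to shift the index in the second sum by setting $j = i+1$, turning it into $\Sum_{j=1}^{N+1} (-1)^{j-1}\binom{N}{j-1} f(t-j)$, and then to combine the two sums as coefficients of $f(t-i)$ for $0 \le i \le N+1$.

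Collecting these coefficients, the $i=0$ term contributes $\binom{N}{0} = \binom{N+1}{0}$, the $i = N+1$ term contributes $(-1)^{N+1}\binom{N}{N} = (-1)^{N+1}\binom{N+1}{N+1}$, and for each interior index $1 \le i \le N$ the coefficient is $(-1)^i\bigl[\binom{N}{i} + \binom{N}{i-1}\bigr]$, which equals $(-1)^i \binom{N+1}{i}$ by Pascal's rule. This yields exactly $\Sum_{i=0}^{N+1} (-1)^i \binom{N+1}{i} f(t-i)$, closing the induction.

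There is no genuine obstacle here; the argument is a routine induction. The only points requiring care are the bookkeeping in the index shift, the handling of the two boundary terms ($i=0$ and $i=N+1$) that fall outside the range where Pascal's rule is applied, and the verification at each stage that the correct domain is $\N_{a+N+1}$, so that every evaluation $f(t-i)$ remains within $\N_a$.
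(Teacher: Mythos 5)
Your proof is correct: the induction on $N$ via the recursion $\nabla^{N+1}f(t)=\nabla^{N}f(t)-\nabla^{N}f(t-1)$, the index shift, and Pascal's rule, with the domain bookkeeping ensuring $t,t-1\in\N_{a+N}$ and all evaluations $f(t-i)$ staying in $\N_a$, is exactly the standard argument. The paper itself states this proposition without proof (it is quoted as background material from the discrete fractional calculus literature), so there is nothing to contrast with; your write-up supplies the expected proof.
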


\begin{definition}We define the \textbf{backward jump operator}, $\rho: \N_a \to \N_a$, by $\rho(t) := \max\{a, t-1\}$. 
\end{definition}

\begin{definition}\cite[p. 333]{pb} The \textbf{nabla definite integral} of a function $f: \N_{a+1}^b \to \R$, for $c, d \in \N_a^b$, is defined by
$\int_c^d f(t) \nabla t := \begin{cases} \sum\limits_{t=c+1}^d f(t), &  d >c \\
0, &   d = c \\
-\sum\limits_{t=d+1}^c f(t), & d < c.\end{cases}$ \end{definition}

\begin{theorem} \textbf{(Fundamental Theorem of Nabla Calculus)} \label{ftnc}
If $f: \N_{a+1}^b \to \R$ and $F$ is any nabla antidifference of $f$ on $\N_a^b$ (i.e., $\nabla F(t) = f(t)$, for $t \in \N_{a+1}^b$), then
\[ \int_a^b f(t) \nabla t = F(b) - F(a). \]
\end{theorem}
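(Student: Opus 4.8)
The plan is to unwind both sides into ordinary finite sums and then exhibit the claimed identity as a telescoping collapse. First I would invoke the standing hypothesis that $b - a$ is a positive integer, so that $b > a$ and the first branch of the definition of the nabla definite integral applies; this gives
\[ \int_a^b f(t)\, \nabla t = \Sum_{t=a+1}^b f(t). \]

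Next I would use that $F$ is a nabla antidifference of $f$ on $\N_a^b$, which by the definition of the nabla difference means $f(t) = \nabla F(t) = F(t) - F(t-1)$ for every $t \in \N_{a+1}^b$. Substituting this into the sum yields
\[ \int_a^b f(t)\, \nabla t = \Sum_{t=a+1}^b \big( F(t) - F(t-1) \big). \]
I would then split this into two sums and reindex the second, writing $\Sum_{t=a+1}^b F(t-1) = \Sum_{s=a}^{b-1} F(s)$, so that the two sums share the common terms $F(a+1), \dots, F(b-1)$; cancelling these leaves exactly $F(b) - F(a)$, as claimed. Alternatively, the telescoping can be justified by a short induction on the positive integer $b - a$.

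The main obstacle here is essentially nonexistent: the statement is the discrete analogue of the fundamental theorem of calculus, and its entire content is the telescoping of a finite sum. The only points requiring any care are purely bookkeeping --- confirming that the index ranges line up after the reindexing and that we are indeed in the $d > c$ branch of the integral definition --- so I expect the argument to be short and routine.
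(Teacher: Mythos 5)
Your proof is correct. The paper itself states this result as background (Theorem \ref{ftnc}) without proof, deferring to the cited reference \cite{gp}, so there is no in-paper argument to compare against; your telescoping argument --- unwinding the nabla integral into the sum $\Sum_{t=a+1}^b \big(F(t)-F(t-1)\big)$ and cancelling the interior terms --- is the standard and essentially the only proof of this statement, and your bookkeeping (using the standing assumption that $b-a$ is a positive integer to land in the $d>c$ branch of the integral's definition, and checking the index ranges after reindexing) is exactly the care the argument requires.
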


Next, we will define nabla fractional sums and differences. Let $\Z_{\leq 0}$ denote the set of nonpositive integers.

\begin{definition} For $t, r \in \C$, the \textbf{generalized rising function} is defined by \[ t \rp {r} := \begin{cases} \frac{\Gamma(t+r)}{\Gamma(t)}, & \text{ if } t+r, t \not \in \Z_{\leq 0} \\
0, & \text{ if }  t+r \not \in \Z_{\leq 0} \text{, and } t  \in \Z_{\leq 0}\\
(-1)^{r} \frac{(-t)!}{(-t-r)!} & \text{ if } t+r, t \in \Z_{\leq 0}\\
\text{undefined} & \text{ if } t+r  \in \Z_{\leq 0}\text{, and } t \not \in \Z_{\leq 0},   \end{cases} \]
where $\Gamma$ is the Gamma function.
 \end{definition}

\begin{proposition} \label{gammaprop} For $z \in \C \setminus \{0, -1, -2, -3, \hdots \}$, we have $\Gamma(z+1) = z \Gamma(z).$ \end{proposition}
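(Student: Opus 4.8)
The plan is to establish the functional equation first on the right half-plane $\{z : \mathrm{Re}(z) > 0\}$ using the integral representation of the Gamma function, and then extend it to the full domain $\C \setminus \Z_{\leq 0}$ by analytic continuation.

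First I would recall that for $\mathrm{Re}(z) > 0$ the Gamma function admits the absolutely convergent integral representation $\Gamma(z) = \int_0^\infty t^{z-1} e^{-t}\, dt$. Applying this to $\Gamma(z+1) = \int_0^\infty t^{z} e^{-t}\, dt$ and integrating by parts with $u = t^z$ and $dv = e^{-t}\, dt$ yields $\Gamma(z+1) = \big[-t^z e^{-t}\big]_0^\infty + z \int_0^\infty t^{z-1} e^{-t}\, dt$. The boundary term vanishes: as $t \to \infty$ the exponential decay dominates the polynomial growth, and as $t \to 0^+$ the factor $t^z$ tends to $0$ because $\mathrm{Re}(z) > 0$. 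Hence $\Gamma(z+1) = z\,\Gamma(z)$ on the right half-plane.

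To pass to all $z \in \C \setminus \Z_{\leq 0}$, I would invoke the standard fact that $\Gamma$ extends to a meromorphic function on $\C$ whose only singularities are simple poles at the nonpositive integers, so that both $\Gamma(z+1)$ and $z\,\Gamma(z)$ are analytic on $\C \setminus \Z_{\leq 0}$. Since these two analytic functions agree on the open half-plane $\mathrm{Re}(z) > 0$, whose points accumulate inside the connected domain $\C \setminus \Z_{\leq 0}$, the identity theorem forces them to coincide throughout.

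The main obstacle is not the integration by parts, which is routine, but the continuation step: one must be careful to interpret the identity on the common domain of analyticity and to confirm that the two sides have matching (removable) behavior away from the poles. Alternatively, the whole statement can be obtained in one stroke from Euler's limit formula $\Gamma(z) = \lim_{n\to\infty} \frac{n!\, n^z}{z(z+1)\cdots(z+n)}$, valid for every $z \notin \Z_{\leq 0}$: forming the ratio $\Gamma(z+1)/\Gamma(z)$ causes the products to telescope, leaving $\lim_{n\to\infty} \frac{nz}{z+n+1} = z$, which bypasses the need for a separate analytic continuation argument.
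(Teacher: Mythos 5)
Your proof is correct, but there is nothing in the paper to compare it against: the paper states this proposition in the Preliminaries as a standard property of the Gamma function, without proof, deferring background to its cited reference. Your main argument is the classical textbook derivation and is complete: integration by parts of $\int_0^\infty t^{z}e^{-t}\,dt$ on the half-plane $\mathrm{Re}(z)>0$ (with the boundary terms correctly handled at both endpoints), followed by the identity theorem on the connected open set $\C \setminus \Z_{\leq 0}$, where both $\Gamma(z+1)$ and $z\,\Gamma(z)$ are analytic and agree on an open subset. One small caveat on your alternative route via Euler's limit formula: forming the quotient $\Gamma(z+1)/\Gamma(z)$ and taking the limit of the ratio of the Euler products tacitly assumes $\Gamma(z)\neq 0$, which is itself a standard but nontrivial fact (e.g.\ from the reflection formula or the Weierstrass product). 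This is easily repaired without that input: observe that the $n$-th Euler approximant of $\Gamma(z+1)$ equals $\frac{n}{z+n+1}$ times the $n$-th Euler approximant of $z\,\Gamma(z)$, so letting $n\to\infty$ and using only the existence of the two limits yields $\Gamma(z+1)=z\,\Gamma(z)$ directly.
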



\begin{definition} \label{Taylor}
For $\nu \in \R$, the $\nu$-th order \textbf{nabla Taylor monomial}, based at $s \in \N_a$, is defined for $t \in \N_a$ by
\begin{equation*} 
H_{\nu}(t,s):= \frac{(t-s)\rp{\nu}}{\Gamma(\nu+1)}.
\end{equation*}
\end{definition}

Next, we state several properties of the nabla Taylor monomials.

\begin{theorem} \label{taylormonprops} For $t \in \N_a$ and $\mu \in \R$,
\begin{enumerate}
\item for $\mu \not = 0$, $H_{\mu}(a,a) = 0$ and $H_0(t,a) \equiv 1$;
\item $\nabla H_{\mu}(t,a) = H_{\mu-1}(t,a);$
\item for $\mu \not = -1$, $\int_a^t H_{\mu}(s,a) \nabla s = H_{\mu+1}(t,a);$
\item for $\mu \not = -1$, $\int_a^t H_{\mu}(t,\rho(s)) \nabla s = H_{\mu+1}(t,a);$
\item for $k \in \N_1$, $s \in \{ a + n \mid n \in \Z\},$ and $t \in \N_{s+k+1}$, $H_{-k}(t,s) = 0;$
\end{enumerate}
provided the expressions above are defined.
\end{theorem}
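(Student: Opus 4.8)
The plan is to isolate statement (2) as the computational heart of the theorem and bootstrap the remaining parts from it, reading (1) and (5) directly off the piecewise definition of the rising function and deducing (3) and (4) from (2) together with the Fundamental Theorem of Nabla Calculus (Theorem \ref{ftnc}). To prove (2) I would carry out a direct Gamma-function computation: writing $m := t-a$ and expanding $\nabla H_\mu(t,a) = H_\mu(t,a) - H_\mu(t-1,a)$ via Definition \ref{Taylor} gives $\frac{1}{\Gamma(\mu+1)}\big[\frac{\Gamma(m+\mu)}{\Gamma(m)} - \frac{\Gamma(m-1+\mu)}{\Gamma(m-1)}\big]$. Applying the recurrence $\Gamma(z+1)=z\Gamma(z)$ of Proposition \ref{gammaprop} to extract common factors, the bracket collapses to $\mu\,\frac{\Gamma(m+\mu-1)}{\Gamma(m)} = \mu\,(t-a)\rp{\mu-1}$, and one further use of the recurrence to rewrite $\Gamma(\mu+1)=\mu\Gamma(\mu)$ yields exactly $H_{\mu-1}(t,a)$.

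Statements (1) and (5) I would obtain from the four cases of the generalized rising function. For (1), $H_\mu(a,a)=\frac{0\rp{\mu}}{\Gamma(\mu+1)}$: when $\mu\notin\Z_{\leq 0}$ the numerator vanishes by the second case, while for $\mu$ a negative integer the numerator is finite but $\Gamma(\mu+1)$ sits at a pole, so the quotient is $0$ under the standard convention that $1/\Gamma$ of a nonpositive integer is $0$; and $H_0(t,a)=(t-a)\rp{0}/\Gamma(1)=1$ for every $t\in\N_a$ (the first case for $t>a$, the third case at $t=a$). For (5), the hypothesis $t\in\N_{s+k+1}$ forces $t-s\geq k+1$, so $t-s$ and $t-s-k$ are both positive integers and $(t-s)\rp{-k}=\Gamma(t-s-k)/\Gamma(t-s)$ is finite; since $\Gamma(1-k)$ has a pole for $k\in\N_1$, the same convention gives $H_{-k}(t,s)=0$. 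The role of the strict bound $s+k+1$ is precisely to keep the numerator out of the ``undefined'' case of the rising function.

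Statement (3) is then immediate: by (2) the function $H_{\mu+1}(\cdot,a)$ is a nabla antidifference of $H_\mu(\cdot,a)$, so Theorem \ref{ftnc} gives $\int_a^t H_\mu(s,a)\nabla s = H_{\mu+1}(t,a)-H_{\mu+1}(a,a)$, where the second term vanishes by (1) since $\mu+1\neq 0$. For (4) I would first establish the companion difference rule in the \emph{second} variable, $\nabla_s H_{\mu+1}(t,s) = -H_\mu(t,\rho(s))$, by the same Gamma manipulation as in (2) (noting that $\rho(s)=s-1$ throughout $\N_{a+1}$, so no boundary case intervenes); then $-H_{\mu+1}(t,\cdot)$ is an antidifference of the integrand, and Theorem \ref{ftnc} yields $\int_a^t H_\mu(t,\rho(s))\nabla s = H_{\mu+1}(t,a)-H_{\mu+1}(t,t)$, with $H_{\mu+1}(t,t)=0$ again by (1).

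The main obstacle I anticipate is bookkeeping rather than conceptual depth: the Gamma-recurrence cancellations in (2) and in the second-variable rule underlying (4) must be executed carefully, and at each stage one must check that every rising function and Gamma value encountered lands in a defined (finite or conventionally-zero) case of the piecewise definition—this is exactly where (1), (5), and the precise hypotheses $\mu\neq -1$ and $t\in\N_{s+k+1}$ earn their keep. The one genuinely delicate point is verifying that the nabla difference $\nabla_s$ in (4) interacts correctly with the $\rho$-shift so that the antidifference identity remains valid at the left endpoint $s=a+1$.
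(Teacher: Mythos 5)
Your proposal is correct, but there is nothing in the paper to compare it against: Theorem \ref{taylormonprops} is stated in the Preliminaries as background material, with the reader referred to the cited reference \cite{gp} rather than given a proof. Judged on its own merits, your argument is sound and is essentially the standard one from that literature: the Gamma-recurrence computation for part (2) is right (the bracket does collapse to $\mu\,\Gamma(m+\mu-1)/\Gamma(m)$, and $\Gamma(\mu+1)=\mu\Gamma(\mu)$ finishes it); parts (1) and (5) do follow from the case analysis of the generalized rising function together with the convention that $1/\Gamma$ vanishes at nonpositive integers, and you correctly identify that the hypothesis $t\in\N_{s+k+1}$ exists to keep $(t-s)\rp{-k}$ out of the undefined case; part (3) is exactly (2) plus Theorem \ref{ftnc} plus (1); and your companion rule for part (4), $\nabla_s H_{\mu+1}(t,s) = -H_{\mu}(t,\rho(s))$, is a correct identity (one can check it by the same manipulation, $\Gamma(n+\mu+2)/\Gamma(n+1) = (n+\mu+1)\Gamma(n+\mu+1)/(n\,\Gamma(n))$ with $n=t-s$, and it even survives the endpoint $s=t$ where both sides equal $-1$). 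The only caveats are the ones you already flag yourself: at degenerate points (e.g.\ $m=1$ in part (2), or $n=0$ in part (4)) the generic computation formally divides by $\Gamma(0)$, so one either invokes the reciprocal-Gamma convention or handles those points separately via part (1); and negative-integer values of $\mu$ are excused by the theorem's proviso that the expressions be defined. Your decomposition --- (2) as the computational core, (1) and (5) from the piecewise definition, (3) and (4) bootstrapped through the Fundamental Theorem of Nabla Calculus --- is a clean and complete way to organize the proof the paper omits.
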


\begin{definition} \label{fracsum} 
Let $f: \N_{a+1} \to \R$ and $\nu > 0$. Then, the \textbf{nabla fractional sum} of $f$ of order $\nu$, based at $a$, is defined by
\begin{equation} \label{fracsumform}
\nabla_a^{-\nu} f(t) := \int_a^t H_{\nu-1}(t, \rho(s)) f(s) \nabla s,
\end{equation}
for $t \in \N_{a+1}$. Also, we define $\nabla_a^{-0} f(t):= f(t)$.
\end{definition}

\begin{definition} \label{fracdif}
Let $f: \N_{a-N+1} \to \R$, $\nu > 0$, and $N:= \Ceil{\nu}$. Then, the $\nu$-th order \textbf{nabla Caputo fractional difference} of $f$ is defined by 
\begin{equation} \label{fracdifform}
\nabla_{a*}^{\nu} f(t) := \nabla_a^{-(N-\nu)}\nabla^N f(t),
\end{equation}
for $t \in \N_{a+1}$. By convention, $\nabla_{a*}^{\nu} f(t) = 0$ for $t \in \{a-k \mid k \in \N_0\}.$
\end{definition}


A variation of constants formula for a nabla Caputo initial value problem is given in the next theorem. 

\begin{theorem} \label{caputoIVPthm} Consider the IVP
\begin{equation} \label{caputoIVP}
\begin{cases}
\nabla_{a*}^{\nu} x(t) = h(t), &t \in \N_{a+1} \\
\nabla^k x(a) = c_k, &k \in \N_0^{N-1},
\end{cases} 
\end{equation}
where $\nu > 0$, $N:= \Ceil{\nu},$ $h:\N_{a+1} \to \R$, and $c_k \in \R$ for $k \in \N_0^{N-1}$. 
Then, the unique solution to the IVP \eqref{caputoIVP} is given by $$x(t) = \sum\limits_{k=0}^{N-1} H_k(t,a) c_k + \nabla_a^{-\nu} h(t), \,\ t \in \N_{a-N+1}. $$
\end{theorem}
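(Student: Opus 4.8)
The plan is to verify that the proposed formula both satisfies the fractional difference equation and meets the $N$ initial conditions, and then to establish uniqueness separately. Write $x(t) = \sum_{k=0}^{N-1} H_k(t,a)c_k + \nabla_a^{-\nu}h(t)$ and call the first term the polynomial part $p(t)$. To check the equation I would apply the operator $\nabla_{a*}^{\nu} = \nabla_a^{-(N-\nu)}\nabla^N$ (Definition \ref{fracdif}) and use linearity. Each monomial $H_k(\cdot,a)$ with $0 \le k \le N-1$ is a polynomial of degree $k < N$ in $t$; equivalently, iterating property (2) of Theorem \ref{taylormonprops} gives $\nabla^N H_k(t,a) = H_{k-N}(t,a)$, which vanishes by property (5). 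Hence $\nabla^N p \equiv 0$ and so $\nabla_{a*}^{\nu} p \equiv 0$, and it remains to establish the inversion identity $\nabla_{a*}^{\nu}\nabla_a^{-\nu}h = h$. I would get this from the power rule $\nabla\nabla_a^{-\mu}f = \nabla_a^{-(\mu-1)}f$ for $\mu \ge 1$, which is provable directly by the summation computation that isolates the diagonal term $H_{\mu-1}(t,t-1)=1$: applying $\nabla$ the first $N-1$ times reduces $\nabla_a^{-\nu}h$ to $\nabla_a^{-(\nu-N+1)}h$ with $\nu-N+1 \in (0,1]$, after which one verifies $\nabla_a^{-(N-\nu)}\nabla\nabla_a^{-(\nu-N+1)}h = h$ directly (or cites the corresponding composition rule from \cite{gp}). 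I expect this composition step to be the main obstacle, since the final reduction involves a sum of order in $(0,1]$ and a convolution identity among the $H_\mu$, rather than a clean iteration of the power rule.

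Next, for the initial conditions, I would evaluate $\nabla^j x(a)$ for $0 \le j \le N-1$. The fractional-sum part contributes nothing: by the binomial formula (Proposition \ref{binomialnablaexpansion}), $\nabla^j \nabla_a^{-\nu}h(a) = \sum_{i=0}^{j}(-1)^i \binom{j}{i}\nabla_a^{-\nu}h(a-i)$, and every term vanishes because $\nabla_a^{-\nu}h(t)=0$ for $t \le a$. For the polynomial part, iterating property (2) of Theorem \ref{taylormonprops} gives $\nabla^j H_k(t,a) = H_{k-j}(t,a)$, and evaluating at $t=a$ via property (1), where $H_0(a,a)=1$ and $H_\mu(a,a)=0$ for $\mu\neq 0$, yields $\nabla^j H_k(a,a) = \delta_{jk}$ for $k \ge j$, while for $k<j$ the $j$-th difference of the degree-$k$ polynomial $H_k(\cdot,a)$ is identically zero. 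Thus $\nabla^j x(a) = c_j$, as required; this is precisely why the solution is recorded on $\N_{a-N+1}$, the domain on which all of these lower-order differences at $a$ are defined.

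Finally, for uniqueness, suppose $x_1,x_2$ both solve the IVP and set $w := x_1 - x_2$, so that $\nabla_{a*}^{\nu}w \equiv 0$ on $\N_{a+1}$ and $\nabla^k w(a)=0$ for $0 \le k \le N-1$. Writing $g := \nabla^N w$ on $\N_{a+1}$, the equation becomes $\nabla_a^{-(N-\nu)}g \equiv 0$. Since $\nabla_a^{-(N-\nu)}g(t) = \sum_{s=a+1}^{t} H_{N-\nu-1}(t,\rho(s))g(s)$ has diagonal coefficient $H_{N-\nu-1}(t,t-1) = 1\rp{N-\nu-1}/\Gamma(N-\nu) = 1$, this operator is unit lower-triangular and therefore injective, forcing $\nabla^N w \equiv 0$. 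Combined with the $N$ homogeneous initial conditions, the standard integer-order argument — solve the triangular system (again via Proposition \ref{binomialnablaexpansion}) for $w(a-N+1),\dots,w(a)$ and then recurse forward using $\nabla^N w = 0$ — gives $w \equiv 0$, which completes the proof.
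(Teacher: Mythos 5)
The paper never proves Theorem \ref{caputoIVPthm}: it is stated as a preliminary, with the section's background deferred to \cite{gp}, so there is no in-paper argument to compare yours against. Judged on its own terms, your proposal is a sound verification-plus-uniqueness argument, and two of its three parts are complete. The initial-condition computation is correct: $\nabla^j H_k(a,a)=\delta_{jk}$ for $k\ge j$, the $j$-th difference of the degree-$k$ polynomial $H_k(\cdot,a)$ vanishes identically for $k<j$, and the fractional-sum part drops out at and below $t=a$ by the same vanishing convention the paper itself invokes in the proof of Theorem \ref{GreensfunctionthmkNk}. Your uniqueness argument is also complete and clean: the homogeneous initial conditions force $w\equiv 0$ on $\N_{a-N+1}^{a}$ via the triangular system from Proposition \ref{binomialnablaexpansion}; injectivity of $\nabla_a^{-(N-\nu)}$, which is unit lower triangular since $H_{N-\nu-1}(t,\rho(t))=1\rp{N-\nu-1}/\Gamma(N-\nu)=1$, forces $\nabla^N w\equiv 0$; and forward recursion finishes. (One edge case: when $\nu=N$ is an integer the kernel computation is vacuous, but then $\nabla_a^{-0}$ is the identity by definition, so injectivity is trivial; this deserves a sentence.)

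The one genuinely incomplete step is the one you flagged yourself: the identity $\nabla_{a*}^{\nu}\nabla_a^{-\nu}h=h$. Your power-rule reduction to $\nabla_a^{-(N-\nu)}\nabla\nabla_a^{-(\nu-N+1)}h=h$ is correct (the diagonal-term bookkeeping is exactly right), but the remaining step is a genuine convolution identity among the Taylor monomials, proved either by a Chu--Vandermonde-type summation or by citing the composition rules for fractional sums. Be aware that within this paper you cannot get it for free: equation \eqref{hparticularsoln} in the proof of Theorem \ref{caputogensolntaylor} asserts exactly this identity, but the paper justifies it by appealing to Theorem \ref{caputoIVPthm} --- the very statement you are proving --- so invoking it would be circular. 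Your stated fallback of citing the composition rule from \cite{gp} is therefore the right (and non-circular) move; with that step either computed out or so cited, the proof is complete.
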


We will also state the following Leibniz formula, which is useful when showing that integral expressions satisfy nabla difference equations.

\begin{theorem} \label{Leibniz}\textbf{(Nabla Leibniz Formula).} Assume $f: \N_a \times \N_{a+1} \to \R$. Then, for $t \in \N_{a+1}$, 
\begin{equation} \label{leibnizformula}
\nabla \left( \int_a^t f(t,\tau)\nabla \tau\right) = \int_a^t \nabla_t f(t,\tau) \nabla \tau + f(\rho(t), t). 
\end{equation}
\end{theorem}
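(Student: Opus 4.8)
The plan is to prove the identity by a direct computation, unwinding both the nabla difference and the nabla integral into their definitions as finite differences and finite sums, and then carefully reconciling the two boundary contributions. Write $g(t) := \int_a^t f(t,\tau)\nabla\tau$ and recall from the definition of the nabla definite integral that, for $t \in \N_{a+1}$, we have $g(t) = \sum_{\tau=a+1}^{t} f(t,\tau)$. The only delicate feature here is that $t$ appears both as the upper limit of the sum and inside the summand, so the two occurrences must be differenced separately.

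First I would expand $\nabla g(t) = g(t) - g(t-1) = \sum_{\tau=a+1}^{t} f(t,\tau) - \sum_{\tau=a+1}^{t-1} f(t-1,\tau)$. I would then peel off the top term $f(t,t)$ from the first sum, so that both remaining sums run over $\tau \in \N_{a+1}^{t-1}$, and combine them termwise to obtain $\sum_{\tau=a+1}^{t-1}\bigl(f(t,\tau)-f(t-1,\tau)\bigr) + f(t,t) = \sum_{\tau=a+1}^{t-1} \nabla_t f(t,\tau) + f(t,t)$, using the definition of the nabla difference in the first variable.

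Next, to match the upper limit $t$ appearing in the claimed right-hand side, I would rewrite $\sum_{\tau=a+1}^{t-1}\nabla_t f(t,\tau) = \sum_{\tau=a+1}^{t}\nabla_t f(t,\tau) - \nabla_t f(t,t) = \int_a^t \nabla_t f(t,\tau)\nabla\tau - \nabla_t f(t,t)$. Since $t \in \N_{a+1}$ gives $\rho(t) = t-1$, the leftover boundary terms collapse: $-\nabla_t f(t,t) + f(t,t) = -\bigl(f(t,t)-f(t-1,t)\bigr) + f(t,t) = f(t-1,t) = f(\rho(t),t)$. Substituting this back yields exactly \eqref{leibnizformula}.

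The computation requires no deep machinery, so the main thing to watch is bookkeeping: correctly tracking which copy of $t$ is being differenced, ensuring all arguments stay within the domains $\N_a$ (first slot) and $\N_{a+1}$ (second slot)---in particular that $f(t-1,\tau)$ is defined because $t \geq a+1$---and handling the endpoint case $t = a+1$, where the interior sum is empty and the identity reduces to $f(a+1,a+1) = \nabla_t f(a+1,a+1) + f(a,a+1)$, which is immediate. This endpoint check is the one place where an off-by-one slip in the limits would surface, so I expect it to be the most error-prone step even though it is not conceptually hard.
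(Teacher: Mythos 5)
Your proof is correct. The paper does not actually prove Theorem \ref{Leibniz}: it is stated as a background preliminary (drawn from the discrete fractional calculus literature, cf.\ the reference cited for Section 2), so there is no internal proof to compare against. Your argument---writing $\int_a^t f(t,\tau)\nabla\tau = \sum_{\tau=a+1}^{t} f(t,\tau)$, peeling off the top term $f(t,t)$, differencing the remaining sums termwise in the first slot, and then reconciling the boundary terms via $-\nabla_t f(t,t) + f(t,t) = f(t-1,t) = f(\rho(t),t)$---is exactly the standard direct computation, and your separate check of the endpoint case $t=a+1$ (where the interior sum is empty and the identity reduces to $f(a+1,a+1) = \nabla_t f(a+1,a+1) + f(a,a+1)$) correctly closes the one place an off-by-one error could hide.
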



\section{Green's Functions}

In this section we will develop Green's functions for $(k, N-k)$ BVPs involving the nabla Caputo difference operator.
The next remark motivates the theorem that follows, which will  establish a form for a general solution to $\nabla_{a*}^{\nu} x(t) = h(t)$ in terms of nabla Taylor monomials based at modified points. This form will be useful when considering $(k, N-k)$ boundary value problems.

\begin{remark} In the continuous case, for each $p \in \N_1^{n-1}$, $x_p(t):=\frac{(t-a)^p}{p!}$  is a solution to the equation $x^{(n)}=0$ satisfying the initial conditions $x^{(i)}(a) = 0$ for $i \in \N_0^{p-1}$. In particular, we say $x_p(t)$ has a zero of multiplicity $p$ at $t=a$. In an analogous way,  for each $p \in \N_1^{N-1}$, $H_p(t,a-N+p)$ satisfies $\nabla^ix(a-N+p)=0$ for $i \in \N_0^{p-1}$ and has $p$ consecutive zeros on the domain $\N_{a-N+1}$ at $t=a-N+1, \hdots, a-N+p$.
\end{remark}


\begin{theorem} \label{caputogensolntaylor} Let $\nu > 0$ and $N:= \Ceil{\nu}$. A general solution to 
\begin{equation} \label{caputogensolntayloreqn}
\nabla_{a*}^{\nu} x(t) = h(t), \,\ t \in \N_{a+1}
\end{equation} is given by 
\begin{equation}\label{caputogensolntaylorsoln}
 x(t) = \Sum_{p=0}^{N-1} c_p H_p(t,a - N +p) + \nabla_a^{-\nu} h(t), 
\end{equation}
for $t \in \N_{a-N+1}$, where $c_p$ for $p \in \N_0^{N-1}$ are arbitrary constants. 
\end{theorem}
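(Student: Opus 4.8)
The plan is to recognize this as the variation-of-constants formula of Theorem \ref{caputoIVPthm} rewritten in a different basis for the homogeneous solution space. Theorem \ref{caputoIVPthm} already supplies the particular solution $\nabla_a^{-\nu}h(t)$ of \eqref{caputogensolntayloreqn}, and it shows that the associated homogeneous equation $\nabla_{a*}^{\nu}x = 0$ has an $N$-dimensional solution space spanned by $H_0(t,a),\dots,H_{N-1}(t,a)$ (the map $x \mapsto (\nabla^k x(a))_{k=0}^{N-1}$ being a bijection onto $\R^N$ by existence-uniqueness). So it suffices to prove that the $N$ functions $H_p(t,a-N+p)$, $p \in \N_0^{N-1}$, are (i) each a solution of the homogeneous equation and (ii) linearly independent. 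Then they form an alternative basis of the same $N$-dimensional space, and superposing the particular solution $\nabla_a^{-\nu}h$ yields every solution, which is exactly \eqref{caputogensolntaylorsoln}.

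For (i), I would compute $\nabla_{a*}^{\nu}H_p(t,a-N+p) = \nabla_a^{-(N-\nu)}\nabla^N H_p(t,a-N+p)$ straight from Definition \ref{fracdif}. Iterating the power rule in part (2) of Theorem \ref{taylormonprops} (applied with the base point $a$ replaced by $a-N+p$, which is legitimate since the monomials depend only on $t-s$) gives $\nabla^N H_p(t,a-N+p) = H_{p-N}(t,a-N+p)$. Because $0 \le p \le N-1$, the order $p-N$ is a negative integer $-k$ with $k := N-p \in \N_1$, and the base point $s := a-N+p$ lies in $\{a+n \mid n \in \Z\}$; part (5) of Theorem \ref{taylormonprops} then forces $H_{-k}(t,s) = 0$ for all $t \in \N_{s+k+1} = \N_{a+1}$. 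Hence $\nabla^N H_p(t,a-N+p)$ vanishes on $\N_{a+1}$, and applying $\nabla_a^{-(N-\nu)}$ to the zero function gives $\nabla_{a*}^{\nu}H_p(t,a-N+p) = 0$.

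For (ii), I would use that for integer $p$ the monomial $H_p(t,a-N+p) = \frac{(t-a+N-p)\rp{p}}{\Gamma(p+1)}$ is, via Proposition \ref{gammaprop}, a polynomial in $t$ of exact degree $p$ with leading coefficient $1/p!$. Since the indices $p = 0,1,\dots,N-1$ are distinct, these are polynomials of pairwise distinct degrees, so any nontrivial linear combination is a nonzero polynomial, which vanishes at only finitely many points; as $\N_{a-N+1}$ is infinite, the functions are linearly independent. (This same polynomial-degree observation gives an alternative route to (i), since the $N$-th nabla difference annihilates any polynomial of degree below $N$.)

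Combining (i) and (ii) with the dimension count from Theorem \ref{caputoIVPthm} closes the argument: $\{H_p(t,a-N+p)\}_{p=0}^{N-1}$ is a basis of the homogeneous solution space, so the general homogeneous solution is $\sum_{p=0}^{N-1} c_p H_p(t,a-N+p)$ with arbitrary constants $c_p$, and \eqref{caputogensolntaylorsoln} follows by superposition with $\nabla_a^{-\nu}h$. I expect the only genuine obstacle to be the bookkeeping in step (i): justifying that the power rule and vanishing rule of Theorem \ref{taylormonprops} remain valid when the base point is shifted below $a$ to $a-N+p$, and verifying that the resulting vanishing holds on precisely the domain $\N_{a+1}$ on which the equation is posed.
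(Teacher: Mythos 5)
Your proposal is correct, and its skeleton coincides with the paper's: both verify that each $H_p(t,a-N+p)$ solves the homogeneous equation by iterating part (2) of Theorem \ref{taylormonprops} (with the base point shifted to $a-N+p$) and then invoking part (5) to get vanishing on exactly $\N_{a+1}$; both take $\nabla_a^{-\nu}h$ as a particular solution via Theorem \ref{caputoIVPthm}; and both conclude by superposition. The one place you genuinely diverge is the spanning argument for the homogeneous solution space. The paper works at the level of initial data: it computes the vectors $(\nabla^k H_p(t,a-N+p)|_{t=a})_{k=0}^{N-1}$, finding the entries equal to $\frac{(N-k-1)!}{(N-p-1)!(p-k)!}$ for $k \leq p$ and $0$ for $k > p$, reads off linear independence from the triangular structure, and then matches initial values and uses uniqueness from Theorem \ref{caputoIVPthm}. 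You instead prove linear independence of the functions themselves: $H_p(t,a-N+p)$ is a polynomial in $t$ of exact degree $p$ (this does hold on all of $\N_{a-N+1}$, since at the points $t \leq a-N+p$ the generalized rising function is defined to be $0$ and the polynomial's factors vanish there as well), so a nontrivial combination is a nonzero polynomial that cannot vanish on an infinite set; combined with the dimension count supplied by the bijection of Theorem \ref{caputoIVPthm}, this gives a basis. Your version is somewhat more elementary, avoiding the rising-factorial evaluation at $t=a$, and your parenthetical remark that $\nabla^N$ annihilates polynomials of degree below $N$ gives a second route to step (i); the paper's explicit initial-value computation, in exchange, is of the same kind as the computation it needs again in Lemma \ref{LeftBCssimplifysoln}. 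Under the isomorphism between the solution space and $\R^N$, the two spanning arguments are of course the same statement that $N$ linearly independent elements of an $N$-dimensional space form a basis.
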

\begin{proof} Note that, for $t \in \N_{a+1}$ and $p \in \N_0^{N-1}$, \begin{align} \label{applycaputototaylormon}
\nabla_{a*}^{\nu}H_p(t,a-N+p)  &\overset{\eqref{fracdifform}}{=} \nabla_a^{-(N-\nu)} \nabla^N H_p(t,a-N+p) \nonumber \\
&=  0,
\end{align} by repeated applications of  Theorem \ref{taylormonprops}, part (2) and by Theorem \ref{taylormonprops}, part (5). Also, \begin{align} \label{hparticularsoln}
\nabla_{a*}^{\nu} \nabla_a^{-\nu} h(t) &{=}  h(t),
\end{align}
for $t \in \N_{a+1}$ since by Theorem \ref{caputoIVPthm}, $\nabla_a^{-\nu} h(t)$ is the unique solution to $\nabla_{a*}^{\nu} x(t) = h(t)$ satisfying the initial conditions $\nabla^k x(a) = 0$, $k \in \N_0^{N-1}$. Hence, by \eqref{applycaputototaylormon}, \eqref{hparticularsoln}, and linearity of the operator $\nabla_{a*}^{\nu}$, we have that $x(t)$, given by \eqref{caputogensolntaylorsoln}, is a solution to \eqref{caputogensolntayloreqn} and is defined on $\N_{a-N+1}$. 
 
Now suppose $y(t)$ is any solution to $\nabla_{a*}^{\nu} y(t) = 0$. Then, $y(t)$ is determined by its initial values, $\nabla^k y(a)$ for $k \in \N_0^{N-1}$, by Theorem \ref{caputoIVPthm}. Let the initial values of $H_p(t,a-N+k)$ for $p \in \N_0^{N-1}$ be given by the vector $\mathbf{v}_p:=$
\begin{equation} \label{ICsvector}
\langle H_p(t,a-N+k)|_{t=a}, \nabla H_p(t,a-N+k)|_{t=a}, \hdots, \nabla^{N-1}H_p(t,a-N+k)|_{t=a}\rangle. 
\end{equation}Then, for $k \in \N_0^{N-1}$,
\begin{align*} 
\nabla^k H_p(t,a-N+p)\Big|_{t=a} 
&=\begin{cases} \frac{(N-p)\rp{p-k}}{(p-k)!} = \frac{(N-k-1)!}{(N-p-1)!(p-k)!}, & k \leq p \\
0, & k > p, \end{cases}
\end{align*} by Theorem \ref{taylormonprops}, parts (2) and (5).  Therefore, the vectors $\mathbf{v}_0, \mathbf{v}_1, \hdots, \mathbf{v}_{N-1} \in \R^{N}$ are linearly independent. It follows that $y(t) = \Sum_{p=0}^{N-1} c_p H_p(t,a-N+p)$ for some $c_p \in \R$, for $p \in \N_0^{N-1}$.

Next, suppose $w(t)$ is a solution to \eqref{caputogensolntayloreqn}. It follows that $w(t) - \nabla_a^{-\nu}h(t)$ is a solution to $\nabla_{a*}^{\nu} y(t) = 0.$ Then, by the above argument, $w(t) - \nabla_a^{-\nu} h(t) = \Sum_{p=0}^{N-1}c_p H_p(t,a-N+p)$ for some $c_p \in \R$. Thus, $w(t) =  \Sum_{p=0}^{N-1}c_p H_p(t,a-N+p) + \nabla_a^{-\nu} h(t)$, so \eqref{caputogensolntaylorsoln} gives a general solution to \eqref{caputogensolntayloreqn}.
\end{proof}

Next, we get a form of any solution $x(t)$ to the homogeneous equation $\nabla_{a*}^{\nu}x(t) = 0$ which satisfies $k$ homogeneous initial conditions, for any fixed $k \in \N_1^{N-1}$, where $N:= \Ceil{\nu}$.

\begin{lemma} \label{LeftBCssimplifysoln}
Let $\nu > 1$, $N:= \Ceil{\nu}$, $k \in \N_1^{N-1}$, and suppose $x: \N_{a-N+1} \to \R$ is a solution to the equation
\begin{equation} \label{homeqnsolnhomics}
\nabla_{a*}^{\nu} x(t) = 0, \,\ t \in \N_{a+1}.
\end{equation} Moreover, assume that $x$ satisfies the conditions
$\nabla^i x(a-N+k) = 0, \,\ i \in \N_0^{k-1}.$ Then,
$x(t) =  \Sum_{p=k}^{N-1} c_p H_p(t,a-N+p), $
where $c_k, c_{k+1}, \hdots, c_{N-1} \in \R$.
\end{lemma}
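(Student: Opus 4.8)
The plan is to build on Theorem \ref{caputogensolntaylor}, which already guarantees that any solution of \eqref{homeqnsolnhomics} has the form $x(t) = \Sum_{p=0}^{N-1} c_p H_p(t,a-N+p)$ for some constants $c_0, \hdots, c_{N-1} \in \R$. The task therefore reduces to showing that the $k$ homogeneous conditions $\nabla^i x(a-N+k) = 0$, $i \in \N_0^{k-1}$, force $c_0 = c_1 = \cdots = c_{k-1} = 0$, which leaves precisely the asserted sum from $p=k$ to $N-1$.

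First I would differentiate term by term: by repeated use of Theorem \ref{taylormonprops}(2), $\nabla^i H_p(t,a-N+p) = H_{p-i}(t,a-N+p)$, so the $i$-th condition reads $\Sum_{p=0}^{N-1} c_p H_{p-i}(a-N+k,a-N+p) = 0$. Since $H_{p-i}(a-N+k,a-N+p) = \frac{(k-p)\rp{p-i}}{\Gamma(p-i+1)}$, the whole argument reduces to reading off when this rising factorial vanishes at the shifted integer nodes.

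Next I would show the coefficients of $c_k, \hdots, c_{N-1}$ drop out of every one of these $k$ equations. For $p \geq k$ and $i \leq k-1$ the order $p-i$ is a positive integer while the base $k-p$ is a nonpositive integer and $k-i \geq 1$; the definition of the generalized rising function (equivalently Theorem \ref{taylormonprops}(1) when $p=k$) then gives $(k-p)\rp{p-i} = 0$. Hence each condition constrains only $c_0, \hdots, c_{k-1}$, and I am left with a $k \times k$ homogeneous system whose matrix has entries $M_{ip} = H_{p-i}(a-N+k,a-N+p)$ for $0 \leq i,p \leq k-1$.

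Finally I would argue that $M$ is upper triangular with unit diagonal, hence invertible. The diagonal entries are $M_{pp} = H_0(a-N+k,a-N+p) = 1$ by Theorem \ref{taylormonprops}(1), and for the strictly lower entries $i>p$ the monomial $H_{p-i}$ has negative integer order $-(i-p)$ based at $s=a-N+p$; since $a-N+k \in \N_{s+(i-p)+1}$ (because $i \leq k-1$), Theorem \ref{taylormonprops}(5) gives $M_{ip}=0$. An invertible homogeneous system forces $c_0 = \cdots = c_{k-1} = 0$, which completes the proof. I expect the main obstacle to be the bookkeeping in evaluating these Taylor monomials at the shifted integer nodes --- in particular tracking which clause of the generalized rising function applies in the negative-order and boundary cases --- rather than any conceptual difficulty.
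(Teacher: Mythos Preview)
Your proposal is correct and follows essentially the same route as the paper: both start from the general solution in Theorem \ref{caputogensolntaylor}, evaluate $\nabla^i H_p$ at $t=a-N+k$, observe that the terms with $p\ge k$ vanish, and then eliminate $c_0,\dots,c_{k-1}$ via the resulting triangular system. The only cosmetic difference is that the paper computes the nonzero entries explicitly as $\frac{(k-i-1)!}{(p-i)!(k-p-1)!}$ and back-substitutes from $i=k-1$ downward, whereas you invoke invertibility of an upper-triangular matrix with unit diagonal; the content is the same.
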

\begin{proof} Let $x(t)$ be a solution to \eqref{homeqnsolnhomics}. Then, by Theorem \ref{caputogensolntaylor}, we have 
$x(t) = \Sum_{p=0}^{N-1} c_p H_p(t,a-N+p), \,\ t \in \N_{a-N+1}, $
where $c_p$ for $p \in \N_0^{N-1}$ are constants. Let $i \in \N_0^{k-1}$ and consider 
\begin{align}\label{plugginginics}
\nabla^i x(a-N+k) &= \Sum_{p=0}^{N-1} c_p \nabla^iH_p(t,a-N+p)\mid_{t=a-N+k} \nonumber\\
&= \Sum_{p=i}^{N-1} c_p H_{p-i}(a-N+k,a-N+p).
\end{align}
Note that for $p \geq i$,
$
H_{p-i}(a-N+k, a-N+p) 
=\begin{cases} 0, & k-p \leq 0 \\
\frac{(k-i-1)!}{(p-i)!(k-p-1)!}, & k-p > 0.
\end{cases}$
 From \eqref{plugginginics} and $\nabla^i x(a-N+k) = 0$, for each  $i \in \N_0^{k-1}$, we have
\begin{equation}\label{pluginsimplify}
\Sum_{p=i}^{k-1} c_p \frac{(k-i-1)!}{(p-i)!(k-p-1)!} = 0. 
\end{equation}
Letting $i=k-1, \,\ k-2, \,\ \hdots, \,\ 0$ in \eqref{pluginsimplify} with the given order implies $c_{k-1}= c_{k-2} = \cdots = c_0 = 0$, respectively. Hence, $x(t) = \Sum_{p=k}^{N-1} c_p H_p(t,a-N+p)$. \end{proof}


Next, we give an existence-uniqueness result, often referred to as Fredholms Alternative Theorem \cite{greens},  for two-point boundary value problems involving the operator $\nabla_{a*}^{\nu}$.  

\begin{theorem} (Existence-Uniqueness Theorem) \label{existenceuniquenessbvpkNk}
Let $\nu > 1$, $N:= \Ceil{\nu}$, $k \in \N_1^{N-1}$, and $h:\N_{a+1}^b \to \R$. Furthermore, let $j_m \in \N_0^{N-1}$ for  $m \in \N_1^{N-k},$ with
$j_1 < j_2 < j_3 < \cdots < j_{N-k}$, and assume $b-a \in \N_{\max\{1,j_{N-k}-N+k+1\}}$. Then, the homogeneous $(k, N-k)$ BVP
\begin{equation} \label{homogeneouskNk}
\begin{cases}
\nabla_{a*}^{\nu}y(t) = 0,  &t \in \N_{a+1}^b \\
\nabla^i y(a-N+k) = 0, &i \in \N_0^{k-1} \\
\nabla^{j_m} y(b) = 0, &m \in \N_1^{N-k} 
\end{cases}
\end{equation}
has only the trivial solution if and only if the nonhomogeneous $(k, N-k)$ BVP
\begin{equation} \label{nonhomogeneouskNk}
\begin{cases}
\nabla_{a*}^{\nu}w(t) = h(t), &t \in \N_{a+1}^b \\
\nabla^i w(a-N+k) = A_i, &i \in \N_0^{k-1} \\
\nabla^{j_m} w(b) = B_{j_m}, &m \in \N_1^{N-k},
\end{cases}
\end{equation}
has a unique solution for each $A_i, B_{j_m} \in \R$, for $i \in \N_0^{k-1}$ and $m \in \N_1^{N-k}$.
\end{theorem}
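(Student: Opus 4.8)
The plan is to prove this Fredholm-Alternative-type statement by reducing the BVP to a finite-dimensional linear-algebra problem, exploiting the general-solution form established in Theorem~\ref{caputogensolntaylor} and simplified by Lemma~\ref{LeftBCssimplifysoln}. The key observation is that, by Theorem~\ref{caputogensolntaylor}, any solution of the nonhomogeneous equation $\nabla_{a*}^{\nu} w(t) = h(t)$ can be written as $w(t) = \Sum_{p=0}^{N-1} c_p H_p(t,a-N+p) + \nabla_a^{-\nu} h(t)$, so the only freedom lies in the $N$ constants $c_0,\dots,c_{N-1}$. The boundary conditions impose $N$ linear equations on these constants: $k$ conditions at the left endpoint and $N-k$ conditions at the right endpoint. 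Thus the entire solvability question collapses to whether a certain $N\times N$ coefficient matrix $M$ is invertible, and the Fredholm Alternative follows from the standard linear-algebra fact that a square system $M\mathbf{c} = \mathbf{d}$ has a unique solution for every $\mathbf{d}$ if and only if the homogeneous system $M\mathbf{c}=\mathbf{0}$ has only the trivial solution.

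First I would set up the homogeneous problem \eqref{homogeneouskNk}. Applying Lemma~\ref{LeftBCssimplifysoln} with the $k$ left-hand conditions $\nabla^i y(a-N+k)=0$ for $i \in \N_0^{k-1}$, I get immediately that any solution of the equation satisfying these left conditions has the reduced form $y(t) = \Sum_{p=k}^{N-1} c_p H_p(t,a-N+p)$, involving only the $N-k$ free constants $c_k,\dots,c_{N-1}$. The remaining $N-k$ right-endpoint conditions $\nabla^{j_m} y(b)=0$ for $m\in\N_1^{N-k}$ then become a homogeneous square linear system $M\mathbf{c}=\mathbf{0}$ in these $N-k$ unknowns, where the $(m,p)$ entry of the $(N-k)\times(N-k)$ matrix $M$ is $\nabla^{j_m} H_p(t,a-N+p)\big|_{t=b}$, computable via Theorem~\ref{taylormonprops}, part~(2), as $H_{p-j_m}(b,a-N+p)$. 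The homogeneous BVP has only the trivial solution precisely when this system forces all $c_p=0$, i.e.\ precisely when $\det M \neq 0$.

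Next I would treat the nonhomogeneous problem \eqref{nonhomogeneouskNk} in parallel. Writing $w(t)=\Sum_{p=0}^{N-1} c_p H_p(t,a-N+p) + \nabla_a^{-\nu} h(t)$ and substituting into all $N$ boundary conditions produces an $N\times N$ linear system for $\mathbf{c}=(c_0,\dots,c_{N-1})$ whose right-hand side depends affinely on the data $A_i$, $B_{j_m}$ and on fixed values of $\nabla_a^{-\nu}h$ at the endpoints. The crucial point is that the coefficient matrix of this full $N\times N$ system is, after the same elimination of $c_0,\dots,c_{k-1}$ carried out in Lemma~\ref{LeftBCssimplifysoln} (the left conditions pin down exactly those first $k$ constants up to the data $A_i$), block-triangular, and its determinant reduces to $\det M$ from the homogeneous analysis. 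Hence the nonhomogeneous system is uniquely solvable for every choice of data if and only if $\det M \neq 0$, which is exactly the condition characterizing triviality of the homogeneous BVP. Chaining these two equivalences through the common pivot $\det M \neq 0$ yields the stated if-and-only-if.

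The hypothesis $b-a \in \N_{\max\{1,\,j_{N-k}-N+k+1\}}$ is precisely what guarantees the relevant monomials $H_{p-j_m}(b,a-N+p)$ are evaluated at arguments where they are defined and not forced to vanish by part~(5) of Theorem~\ref{taylormonprops}, so I would verify at the outset that all matrix entries are well defined under this domain condition. The main obstacle I anticipate is the careful bookkeeping needed to confirm the block-triangular structure: one must check that the left boundary conditions genuinely decouple to determine $c_0,\dots,c_{k-1}$ in terms of the $A_i$ (mirroring \eqref{pluginsimplify} in Lemma~\ref{LeftBCssimplifysoln}), so that the effective square system governing solvability is exactly the $(N-k)\times(N-k)$ system in $c_k,\dots,c_{N-1}$ coming from the right conditions. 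Once this reduction is clean, the equivalence is just the square-matrix Fredholm Alternative, and no analytic estimates are required.
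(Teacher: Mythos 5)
Your proposal is correct, and its skeleton is the paper's: parametrize all solutions by the $N$ constants supplied by Theorem~\ref{caputogensolntaylor}, translate the $N$ boundary conditions into a square linear system, and invoke the finite-dimensional Fredholm alternative. The organization, however, is genuinely different. The paper works with the full $N\times N$ matrix (its $M$) for \emph{both} problems, so the equivalence is read off a single object: triviality of the homogeneous BVP forces $\det M \neq 0$, and $\det M \neq 0$ immediately gives unique solvability of the nonhomogeneous system; Lemma~\ref{LeftBCssimplifysoln} is never used there, and the converse is dismissed as straightforward. You instead pre-reduce the homogeneous problem via Lemma~\ref{LeftBCssimplifysoln} to the $(N-k)\times(N-k)$ matrix (what you call $M$ is the paper's $D$ from \eqref{nonzeroN-kdet}), and then reconnect to the nonhomogeneous problem through a block-triangularity claim, $\det M = \det M_{11}\det D$ with $M_{11}$ invertible. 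That claim is true but is the one step you must actually verify: the top-right $k\times(N-k)$ block vanishes because $\nabla^i H_p(t,a-N+p)\big|_{t=a-N+k}=0$ for $i\in\N_0^{k-1}$ and $p\in\N_k^{N-1}$, and $M_{11}$ is upper triangular with ones on the diagonal, both of which follow from the computation already recorded in the proof of Lemma~\ref{LeftBCssimplifysoln}. The trade is reasonable: your route costs this extra bookkeeping, but it makes both directions of the if-and-only-if fall out symmetrically from a chain of equivalences pivoting on $\det D \neq 0$, and it delivers as a byproduct the characterization ``unique solvability iff $\det D \neq 0$,'' which the paper establishes separately afterwards as Theorem~\ref{necsuffN-kdetthm}.
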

\begin{proof} By Theorem \ref{caputogensolntaylor}, a general solution to $\nabla_{a*}^{\nu} y(t) = 0$ is given by 
$y(t) = c_0 H_0(t,a-N) + c_1H_1(t,a-N+1) + \cdots + c_{N-1} H_{N-1}(t,a-1). $
Fix $k \in \N_1^{N-1}$, let $\alpha:=a-N+k$, and let $x_p(t):= H_p(t,a-N+p)$. Then, $y$ satisfies the boundary conditions in \eqref{homogeneouskNk} if and only if the vector equation 
\[\underbrace{ \left(\begin{matrix}x_0(\alpha) & x_1(\alpha) & \cdots &  x_{N-1}(\alpha)  \\
\nabla x_0(\alpha) & \nabla x_1(\alpha) & \cdots & \nabla x_{N-1}(\alpha)  \\
\vdots & \vdots & \ddots & \vdots \\
 \nabla^{k-1} x_0(\alpha) & \nabla^{k-1} x_1(\alpha) &\cdots &  \nabla^{k-1} x_{N-1}(\alpha) \\
\nabla^{j_1} x_0(b) & \nabla^{j_1} x_1(b) & \cdots &  \nabla^{j_1} x_{N-1}(b)  \\
\nabla^{j_2} x_0(b) &  \nabla^{j_2} x_1(b) &\cdots &\nabla^{j_2} x_{N-1}(b)  \\
\vdots & \vdots & \ddots & \vdots \\
 \nabla^{j_{N-k}} x_0(b) &  \nabla^{j_{N-k}} x_1(b) & \cdots &  \nabla^{j_{N-k}} x_{N-1}(b) \end{matrix}\right)}_{=:M} \underbrace{\left(\begin{matrix} c_0 \\
 c_1 \\
 \vdots \\
 c_{k-1} \\
 c_k \\
 c_{k+1} \\
 \vdots \\
 c_{N-1} \end{matrix} \right)}_{=:\textbf{c}}  = \left(\begin{matrix} 0 \\
 0 \\
 \vdots \\
 0 \\
 0 \\
 0 \\
 \vdots \\
 0 \end{matrix}\right) \] holds. Since, by hypothesis, the homogeneous BVP \eqref{homogeneouskNk} has only the trivial solution, the above vector equation has only the trivial solution $\textbf{c}=\textbf{0}$. Hence, $\det M \not = 0$. 

Now suppose $w$ is a solution to the nonhomogeneous equation $\nabla_{a*}^{\nu} w(t) = h(t)$. Then, by Theorem \ref{caputogensolntaylor}, we have $w(t) = d_0 H_0(t,a-N) + d_1 H_1(t,a-N+1) + \cdots + d_{N-1} H_{N-1}(t,a-1) + \nabla_{a}^{-\nu} h(t),$ for some constants $d_0, d_1, \hdots, d_{N-1}$. Then, the boundary value problem \eqref{nonhomogeneouskNk} has a solution if and only if the vector equation
\[M \underbrace{\left(\begin{matrix} d_0 \\ d_1 \\ \vdots \\ d_{N-1} \end{matrix} \right)}_{:=\textbf{d}} = \left( \begin{matrix} A_0 - \nabla_a^{-\nu}h(a-N+k) \\ \vdots \\ A_{k-1} - \nabla^{k-1} \nabla_a^{-\nu}h(a-N+k) \\ B_{j_1} - \nabla^{j_1}\nabla_a^{-\nu}h(b) \\ \vdots \\ B_{j_{N-k}} - \nabla^{j_{N-k}}\nabla_a^{-\nu}h(b) \end{matrix} \right)\] has a solution. Since $\det M \not = 0$,  this vector equation has a unique solution $\textbf{d}$, so the BVP \eqref{nonhomogeneouskNk} has a unique solution.

The proof of the converse is straightforward and hence omitted.
\end{proof}

 Let $\alpha:= a-N+k$. In the remainder of this section, we let $D:=$
 \begin{equation}\label{nonzeroN-kdet}
 \left( \begin{matrix} \nabla^{j_1}H_k(b,\alpha) & \nabla^{j_1} H_{k+1}(b,\alpha+1) & \cdots & \nabla^{j_1} H_{N-1}(b,a-1) \\
   \nabla^{j_2}H_k(b,\alpha) & \nabla^{j_2} H_{k+1}(b,\alpha+1) & \cdots & \nabla^{j_2} H_{N-1}(b,a-1) \\
   \vdots & \vdots & \ddots & \vdots \\
   \nabla^{j_{N-k}}H_k(b,\alpha) & \nabla^{j_{N-k}} H_{k+1}(b,\alpha+1) & \cdots & \nabla^{j_{N-k}} H_{N-1}(b,a-1) \end{matrix} \right).
 \end{equation}

 \begin{theorem}\label{necsuffN-kdetthm} A necessary and sufficient condition for uniqueness of solutions to the nonhomogeneous BVP \eqref{nonhomogeneouskNk} is 
$\det D \not = 0$, where $D$ is given by \eqref{nonzeroN-kdet}.
 \end{theorem}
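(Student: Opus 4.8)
The plan is to reduce the determinant condition $\det D \neq 0$ to the condition $\det M \neq 0$ from Theorem \ref{existenceuniquenessbvpkNk}, since the latter is already known to be equivalent to uniqueness of solutions to the nonhomogeneous BVP \eqref{nonhomogeneouskNk}. Thus it suffices to show $\det M \neq 0$ if and only if $\det D \neq 0$. My strategy is to analyze the block structure of the matrix $M$ defined in the proof of Theorem \ref{existenceuniquenessbvpkNk}.

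First I would examine the top-left block of $M$, namely the $k \times N$ array whose $(i,p)$ entry is $\nabla^i x_p(\alpha) = \nabla^i H_p(t,a-N+p)\big|_{t=\alpha}$ for $i \in \N_0^{k-1}$ and $p \in \N_0^{N-1}$. Using Theorem \ref{taylormonprops}, parts (2) and (5), together with the evaluation already computed in the proof of Lemma \ref{LeftBCssimplifysoln}, I would show that $\nabla^i H_p(\alpha, a-N+p) = 0$ whenever $p > i$ is large enough — more precisely, that this $k \times N$ block has the form $[L \mid 0]$, where $L$ is a lower-triangular $k \times k$ matrix (in the columns $p = 0, \ldots, k-1$) with nonzero diagonal entries, and the remaining $k \times (N-k)$ block (columns $p = k, \ldots, N-1$) is zero. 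The vanishing of that upper-right block is exactly the content of Lemma \ref{LeftBCssimplifysoln}: the left-hand initial conditions force $c_0 = \cdots = c_{k-1} = 0$, reflecting that $H_p(t,a-N+p)$ has zeros of multiplicity $p$ at $\alpha$.

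Once the block form
\[
M = \begin{pmatrix} L & 0 \\ P & D \end{pmatrix}
\]
is established — where $L$ is lower-triangular $k \times k$ with nonzero diagonal, $P$ is the $(N-k) \times k$ block of the $\nabla^{j_m} x_p(b)$ for $p < k$, and the lower-right $(N-k) \times (N-k)$ block is precisely $D$ as given in \eqref{nonzeroN-kdet} — I would apply the standard formula for the determinant of a block lower-triangular matrix to conclude $\det M = \det L \cdot \det D$. Since $\det L = \prod_{i=0}^{k-1} \tfrac{(N-i-1)!}{(N-i-1)!\,0!} \neq 0$ (each diagonal entry being the $k = p$ case of the evaluation in Lemma \ref{LeftBCssimplifysoln}, which is nonzero), it follows that $\det M \neq 0$ if and only if $\det D \neq 0$. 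Combining this with Theorem \ref{existenceuniquenessbvpkNk} gives the claim.

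The main obstacle will be verifying carefully that the upper-right block of $M$ genuinely vanishes and that $L$ is lower-triangular with nonzero diagonal, i.e., correctly identifying which entries $\nabla^i H_p(\alpha, a-N+p)$ are zero for $p \geq k$ versus $p < k$. This is a bookkeeping argument about arguments and orders of the Taylor monomials, and the indexing (the shift by $a - N + p$ varying with the column $p$) is where sign and off-by-one errors are most likely to creep in; I would pin it down by reusing the explicit piecewise evaluation of $H_{p-i}(a-N+k, a-N+p)$ already derived in the proof of Lemma \ref{LeftBCssimplifysoln}, which handles exactly these quantities.
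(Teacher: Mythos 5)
Your proposal is correct, but it is organized differently from the paper's proof. The paper never returns to the matrix $M$: it invokes Lemma \ref{LeftBCssimplifysoln} to conclude that any solution of the homogeneous equation satisfying the $k$ left-hand conditions has the form $x(t)=\sum_{p=k}^{N-1}c_pH_p(t,a-N+p)$, so that imposing the conditions at $t=b$ yields directly the $(N-k)\times(N-k)$ system $D\mathbf{c}=\mathbf{0}$; hence the homogeneous BVP \eqref{homogeneouskNk} has only the trivial solution if and only if $\det D\neq 0$, and Theorem \ref{existenceuniquenessbvpkNk} finishes the argument. You instead keep the full $N\times N$ matrix $M$ and factor its determinant through the block-triangular structure $M=\bigl(\begin{smallmatrix} L & 0\\ P & D\end{smallmatrix}\bigr)$, obtaining $\det M=\det L\cdot\det D$ with $\det L\neq0$. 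This is a valid alternative: it makes explicit why the $N\times N$ nonsingularity condition collapses to the $(N-k)\times(N-k)$ one, at the cost of redoing, entrywise, the elimination of $c_0,\dots,c_{k-1}$ that Lemma \ref{LeftBCssimplifysoln} has already performed at the level of solutions. Three small corrections. First, with rows indexed by $i\in\N_0^{k-1}$ and columns by $p\in\N_0^{k-1}$, the entry $\nabla^iH_p(t,a-N+p)\big|_{t=\alpha}$ vanishes for $p<i$, so $L$ is \emph{upper} triangular, not lower triangular (immaterial for the determinant, but worth stating correctly). Second, your diagonal product $\prod_{i=0}^{k-1}\frac{(N-i-1)!}{(N-i-1)!\,0!}$ is the evaluation at $t=a$ taken from the proof of Theorem \ref{caputogensolntaylor}; the relevant evaluation is at $t=\alpha=a-N+k$, namely $\frac{(k-i-1)!}{(p-i)!\,(k-p-1)!}$ from Lemma \ref{LeftBCssimplifysoln}, whose diagonal entries ($p=i$) also equal $1$, so $\det L=1$ and your conclusion stands. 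Third, the equivalence you lean on --- uniqueness for \eqref{nonhomogeneouskNk} if and only if $\det M\neq0$ --- is not literally the statement of Theorem \ref{existenceuniquenessbvpkNk}, which relates uniqueness to the homogeneous problem having only the trivial solution; you additionally need the correspondence, established in that theorem's proof, between homogeneous solutions and null vectors of $M$ (via the linear independence of $H_0(t,a-N),\dots,H_{N-1}(t,a-1)$ from the proof of Theorem \ref{caputogensolntaylor}), and this dependence on proof-level facts rather than the stated theorem should be made explicit.
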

 \begin{proof} By Lemma \ref{LeftBCssimplifysoln}, a solution to $\nabla_{a*}^{\nu}x(t) = 0,$ for $ t \in \N_{a+1}^b$, which satisfies the conditions $\nabla^ix(a-N+k)=0,$ for $i \in \N_0^{k-1}$, where $k \in \N_1^{N-1}$ is fixed, is given by $x(t) = c_k H_k(t,a-N+k) + c_{k+1}H_{k+1}(t,a-N+k+1) + \cdots + c_{N-1} H_{N-1}(t,a-1).$ Using the boundary conditions at $t = b$ in \eqref{homogeneouskNk} in the last equation, we get the vector equation
\[ D \left( \begin{matrix} c_k \\ c_{k+1} \\ \vdots \\ c_{N-1} \end{matrix} \right) = \left(\begin{matrix} 0\\
  0\\ \vdots \\ 0\end{matrix} \right),\] where $D$ is given by \eqref{nonzeroN-kdet}. This vector equation has only the trivial solution if and only if $\det D \not = 0$. It follows by Theorem \ref{existenceuniquenessbvpkNk} that the nonhomogeneous BVP \eqref{nonhomogeneouskNk} has a unique solution if and only if $\det D \not = 0$.
  \end{proof}

The next two lemmas are used to show that $\det D \not = 0$.
\begin{lemma} \label{determinantnonzeroequivalence} Let $D$ be as in \eqref{nonzeroN-kdet}. Then, $\det D \not = 0$ if and only if $\det \hat{D} \not = 0$, where $\hat{D}:=$
 \begin{equation}\label{nonzeroN-kdetAppendixtransformed}
 \left( \begin{matrix} \prod\limits_{i=k+1}^{N-1}(i- j_1) & \prod\limits_{i=k+2}^{N-1}(i- j_1)   & \cdots & \prod\limits_{i=N-1}^{N-1} (i- j_1)  & 1 \\
 \prod\limits_{i=k+1}^{N-1}(i- j_2) & \prod\limits_{i=k+2}^{N-1}(i- j_2)   & \cdots & \prod\limits_{i=N-1}^{N-1}(i- j_2) & 1 \\
   \vdots & \vdots & \ddots & \vdots \\
 \prod\limits_{i=k+1}^{N-1}(i- j_{N-k}) & \prod\limits_{i=k+2}^{N-1}(i- j_{N-k})   & \cdots & \prod\limits_{i=N-1}^{N-1}(i- j_{N-k}) & 1 \\ \end{matrix} \right).
 \end{equation}
\end{lemma}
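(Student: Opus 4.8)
The plan is to obtain $\hat D$ from $D$ by a sequence of elementary row and column scalings, each by a nonzero factor, so that $\det D$ and $\det\hat D$ are nonzero multiples of one another and hence vanish simultaneously. First I would make the entries of $D$ explicit. Differencing in $t$ does not move the base point, so applying part (2) of Theorem \ref{taylormonprops} $j_m$ times—this form of part (2) holds for an arbitrary base point $s$, as one checks directly from $H_\mu(t,s)=(t-s)^{\overline{\mu}}/\Gamma(\mu+1)$—gives that the $(m,p)$-entry is
\[
\nabla^{j_m}H_p(b,a-N+p)=H_{p-j_m}(b,a-N+p).
\]
Writing $L:=b-a$ and noting $b-(a-N+p)=L+N-p$, I would expand the rising factorial through the Gamma function. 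Since $L\geq 1$, $N\geq 2$, and $0\leq j_m\leq N-1$, both $L+N-p$ and $L+N-j_m$ are positive integers, so the first case of the generalized rising function applies and
\[
H_{p-j_m}(b,a-N+p)=\frac{(L+N-p)^{\overline{p-j_m}}}{\Gamma(p-j_m+1)}=\frac{\Gamma(L+N-j_m)}{\Gamma(L+N-p)\,\Gamma(p-j_m+1)},
\]
valid for every $p\in\N_k^{N-1}$ and $m\in\N_1^{N-k}$.

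Next I would strip off the row- and column-dependent factors. The numerator $\Gamma(L+N-j_m)$ depends only on the row $m$ and the factor $1/\Gamma(L+N-p)$ only on the column $p$, and both are positive and finite by the inequalities above; factoring $\Gamma(L+N-j_m)$ out of row $m$ and $1/\Gamma(L+N-p)$ out of column $p$ therefore multiplies $\det D$ by a nonzero constant and reduces the matrix to one with entries $1/\Gamma(p-j_m+1)$. Finally I would scale row $m$ by $(N-1-j_m)!>0$, which turns the $(m,p)$-entry into $(N-1-j_m)!/(p-j_m)!=\prod_{i=p+1}^{N-1}(i-j_m)$ (the last column, $p=N-1$, being the empty product $1$); this is exactly the $(m,p)$-entry of $\hat D$. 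As every scaling factor is nonzero, $\det D\neq 0$ if and only if $\det\hat D\neq 0$, which is the claim.

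The one point requiring care is the case $p<j_m$, where $H_{p-j_m}$ has negative order. Here I would observe that both matrices record a zero in that slot, so the single formula above remains valid across all entries: on the $D$ side the denominator $\Gamma(p-j_m+1)$ sits at a nonpositive integer, so with the convention $1/\Gamma(\text{nonpositive integer})=0$ the entry is $0$ (equivalently, part (5) of Theorem \ref{taylormonprops} applies, since $b\geq a-N+j_m+1$ follows from $b-a\geq 1\geq j_m-N+1$, forcing $H_{p-j_m}(b,a-N+p)=0$); on the $\hat D$ side the product $\prod_{i=p+1}^{N-1}(i-j_m)$ contains the factor $i=j_m$ and so vanishes as well. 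Verifying that these conventions line up is the main thing to get right, after which the proof is only bookkeeping of nonzero scalars.
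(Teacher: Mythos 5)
Your proof is correct and follows essentially the same route as the paper: compute the entries of $D$ as $H_{p-j_m}(b,a-N+p)=\Gamma(b-a+N-j_m)/\bigl(\Gamma(b-a+N-p)\,\Gamma(p-j_m+1)\bigr)$, factor the row-dependent numerators and column-dependent denominators out of the determinant to reach the matrix with entries $1/\Gamma(p-j_m+1)$, then rescale row $m$ by $\Gamma(N-j_m)=(N-1-j_m)!$ to obtain $\hat D$. Your explicit check that the conventions agree in the degenerate case $p<j_m$ (zero entry on both sides, via part (5) of Theorem \ref{taylormonprops} or the reciprocal-Gamma convention) is a point the paper passes over silently, but it is the same argument.
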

\begin{proof} Let $p \in \N_{k}^{N-1}$ and $m \in \N_1^{N-k}$. Then, the entry in row $m$ and column $p-k+1$ of the matrix $D$ is $H_{p-j_{m}}(b, a-N+p) = \frac{\Gamma(b-a+N-j_m)}{\Gamma(b-a+N-p)\Gamma(p-j_m+1)}.$   Then,
 $ \det D = \frac{\Gamma(b-a+N-j_1)\Gamma(b-a+N-j_2)\cdots \Gamma(b-a+N-j_{N-k})}{\Gamma(b-a+N-k)\Gamma(b-a+N-k-1)\cdots\Gamma(b-a+1)} \det E, $
 where the entry in row $m$ and column $p-k+1$ of the matrix $E$ is $  \frac{1}{\Gamma(p-j_m+1)}.$
 Note  $\det E \not = 0$ if and only if $\det D \not = 0$. Next, multiplying row $m$ of the matrix $E$ by $\Gamma(N-j_m)$ for each $m \in \N_1^{N-k}$ and then using the property of the Gamma function given in Proposition \ref{gammaprop}, we obtain the matrix
 $\hat{D}$. Moreover, $\det \hat{D} \not = 0$ if and only if $\det D \not = 0$. 
\end{proof}

It can be shown that the matrix $\hat{D}$ can be obtained by elementary column operations on the matrix $E$ defined by\[\left(\begin{matrix} (-1)^{N-k-1}(j_1)^{N-k-1} & (-1)^{N-k-2}(j_1)^{N-k-2} & \cdots & (-1)j_1 & 1 \\
(-1)^{N-k-1}(j_2)^{N-k-1} & (-1)^{N-k-2}(j_2)^{N-k-2} & \cdots & (-1)j_2 & 1 \\
\vdots & \vdots & \ddots & \vdots \\
(-1)^{N-k-1}(j_{N-k})^{N-k-1} & (-1)^{N-k-2}(j_{N-k})^{N-k-2} & \cdots & (-1)j_{N-k} & 1 \end{matrix} \right). \] It follows by the Vandermonde determinant formula \cite[p. 17]{vandermonde} and properties of determinants that $\det E = (-1)^{\frac{(N-k)(N-k-1)}{2}}\prod\limits_{1 \leq p < r \leq N-k} (j_p-j_r) \not = 0$ since $j_1 < j_2 < \cdots < j_{N-k}$. Hence, we get the next lemma.

\begin{lemma} \label{determinantnonzeropolynomial}
Let $k \in \N_1^{N-1}$ be fixed, $j_1 < j_2 < \cdots < j_{N-k}$,  and $j_m \in \N_0^{N-1}$, for $m \in \N_1^{N-k}$. Then, $\det \hat{D} \not = 0$, where $\hat{D}$ is given by \eqref{nonzeroN-kdetAppendixtransformed}.
\end{lemma}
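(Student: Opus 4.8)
The plan is to reduce $\det\hat{D}$ to a Vandermonde determinant by elementary column operations, exactly as announced in the discussion preceding the lemma, and then to read off nonvanishing from the distinctness of the $j_m$. First I would record the functional form of the columns: for $\ell \in \N_1^{N-k-1}$ the entry of $\hat{D}$ in row $m$ and column $\ell$ is the value at $x=j_m$ of the polynomial $p_\ell(x):=\prod_{i=k+\ell}^{N-1}(i-x)$, while the last column is the constant column $p_{N-k}\equiv 1$. The structural point I would stress is that $p_\ell$ has degree $N-k-\ell$ in $x$ with leading coefficient $(-1)^{N-k-\ell}$, so as $\ell$ runs from $1$ to $N-k$ the column degrees run strictly through $N-k-1, N-k-2, \ldots, 1, 0$.

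Next I would carry out the reduction, working from the rightmost column leftward. The last column is already a pure monomial (the constant $1$). Supposing columns $\ell+1,\ldots,N-k$ have been turned into the signed monomial columns $(-1)^{N-k-\ell'}(j_m)^{N-k-\ell'}$, the lower-order remainder $p_\ell(x)-(-1)^{N-k-\ell}x^{N-k-\ell}$ has degree at most $N-k-\ell-1$ and so is a fixed, row-independent linear combination of $1,x,\ldots,x^{N-k-\ell-1}$. Subtracting the matching multiples of columns $\ell+1,\ldots,N-k$ therefore converts column $\ell$ into $(-1)^{N-k-\ell}(j_m)^{N-k-\ell}$. Because the multipliers do not depend on the row index $m$, each of these is a genuine elementary column operation and leaves the determinant unchanged; after reaching $\ell=1$ the matrix $\hat{D}$ has become the signed-Vandermonde matrix $E$ of the preceding paragraph, so $\det\hat{D}=\det E$.

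Finally I would evaluate $\det E$. Pulling the scalar $(-1)^{N-k-\ell}$ out of column $\ell$ extracts the global factor $(-1)^{(N-k)(N-k-1)/2}$ and leaves the matrix with entries $(j_m)^{N-k-\ell}$, which is an ordinary Vandermonde matrix written with the powers in decreasing order. The Vandermonde determinant formula then gives $\det E=(-1)^{(N-k)(N-k-1)/2}\prod_{1\le p<r\le N-k}(j_p-j_r)$. Since $j_1<j_2<\cdots<j_{N-k}$, every factor $j_p-j_r$ is nonzero, whence $\det\hat{D}=\det E\neq 0$, as claimed.

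I expect the one genuinely delicate point to be the bookkeeping of the column-reduction step: one must confirm that the scalars used to clear the lower-order terms are the same across all rows (so that what is performed really is an elementary column operation) and that the strictly decreasing degree pattern guarantees each such term is absorbed by columns lying strictly to the right, so the reduction is triangular and terminates. An equivalent and perhaps cleaner way to package this is to recognize $\hat{D}$ as the generalized Vandermonde matrix $[\,p_\ell(j_m)\,]$ built from polynomials of pairwise distinct degrees $0,1,\ldots,N-k-1$ and to invoke the standard fact that such a determinant equals the product of the leading coefficients times the ordinary Vandermonde determinant; the remaining degree and sign computations are then routine.
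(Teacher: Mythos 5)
Your proposal is correct and takes essentially the same route as the paper: reduce $\hat{D}$ to the signed Vandermonde matrix $E$ by determinant-preserving elementary column operations, then apply the Vandermonde determinant formula and the distinctness of $j_1 < j_2 < \cdots < j_{N-k}$. Your right-to-left triangular reduction simply fills in the detail the paper dispatches with ``it can be shown,'' and your sign bookkeeping reproduces the paper's value $\det E = (-1)^{\frac{(N-k)(N-k-1)}{2}}\prod_{1 \leq p < r \leq N-k}(j_p-j_r)$.
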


The next theorem follows directly from Lemmas \ref{determinantnonzeroequivalence} and \ref{determinantnonzeropolynomial}. 

\begin{theorem} \label{Dnotzero} The matrix $D$, given by \eqref{nonzeroN-kdet}, has a nonzero determinant.
 \end{theorem}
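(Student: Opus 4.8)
The plan is to assemble the final statement directly from the two preceding lemmas, which together have already absorbed all of the genuine work. First I would invoke Lemma \ref{determinantnonzeropolynomial}, which asserts outright that $\det \hat{D} \neq 0$ for the transformed matrix $\hat{D}$ displayed in \eqref{nonzeroN-kdetAppendixtransformed}; this is the substantive input, resting on the Vandermonde determinant evaluation applied to the strictly increasing nodes $j_1 < j_2 < \cdots < j_{N-k}$, which guarantees that the relevant product of differences $j_p - j_r$ is nonzero. Next I would apply the equivalence established in Lemma \ref{determinantnonzeroequivalence}, namely the biconditional that $\det D \neq 0$ if and only if $\det \hat{D} \neq 0$, where that equivalence itself was obtained by factoring out nonvanishing Gamma-function values (via Proposition \ref{gammaprop}) and performing elementary row and column operations that preserve the nonvanishing of the determinant.

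Chaining these two facts gives the result immediately: Lemma \ref{determinantnonzeropolynomial} supplies $\det \hat{D} \neq 0$, and the biconditional of Lemma \ref{determinantnonzeroequivalence} transports this nonvanishing back to the original matrix, so $\det D \neq 0$, which is exactly the claim. No further computation is required at this stage, since the intermediate matrices $E$ and $\hat{D}$ exist precisely to reduce $D$ to a Vandermonde-type determinant whose nonvanishing is classical.

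The only point worth verifying before writing this down is that the standing hypotheses match across the two lemmas and the definition of $D$: each requires a fixed $k \in \N_1^{N-1}$, indices $j_m \in \N_0^{N-1}$ for $m \in \N_1^{N-k}$, and the strict ordering $j_1 < j_2 < \cdots < j_{N-k}$, and these are exactly the assumptions under which $D$ was introduced in \eqref{nonzeroN-kdet}. Consequently there is no real obstacle to this theorem; the difficulty was front-loaded into the Gamma-function factorization of Lemma \ref{determinantnonzeroequivalence} and the Vandermonde argument feeding Lemma \ref{determinantnonzeropolynomial}, and the present statement is an essentially bookkeeping corollary combining the two.
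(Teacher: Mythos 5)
Your proposal is correct and matches the paper's own argument exactly: the paper states that Theorem \ref{Dnotzero} follows directly from Lemma \ref{determinantnonzeroequivalence} (the equivalence $\det D \neq 0 \iff \det \hat{D} \neq 0$) combined with Lemma \ref{determinantnonzeropolynomial} (the Vandermonde-based nonvanishing of $\det \hat{D}$). Your additional check that the standing hypotheses on $k$ and the indices $j_1 < \cdots < j_{N-k}$ are consistent across the two lemmas is sound bookkeeping but introduces nothing beyond the paper's route.
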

 
Using Theorem \ref{necsuffN-kdetthm} and Theorem \ref{Dnotzero}, we get the next theorem.

\begin{theorem} \label{uniquebvpsolndetDnonzero}  The nonhomogeneous BVP \eqref{nonhomogeneouskNk} has a unique solution.
\end{theorem}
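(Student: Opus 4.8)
The plan is simply to chain together the two results that immediately precede this theorem, since they were engineered precisely to yield this conclusion. Theorem \ref{necsuffN-kdetthm} establishes the equivalence between uniqueness of solutions to the nonhomogeneous BVP \eqref{nonhomogeneouskNk} and the single scalar condition $\det D \neq 0$, where $D$ is the $(N-k) \times (N-k)$ matrix displayed in \eqref{nonzeroN-kdet}. Theorem \ref{Dnotzero} then asserts that $\det D \neq 0$ holds unconditionally under the standing hypotheses. Thus the entire argument reduces to confirming that the ambient assumptions licensing both theorems are in force and then stitching the two implications together.

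First I would recall the standing data of the section: $\nu > 1$, $N := \Ceil{\nu}$, a fixed $k \in \N_1^{N-1}$, indices $j_1 < j_2 < \cdots < j_{N-k}$ lying in $\N_0^{N-1}$, and $b-a$ taken large enough (as in Theorem \ref{existenceuniquenessbvpkNk}). Under these, Theorem \ref{Dnotzero} supplies $\det D \neq 0$, its proof resting on the Vandermonde reduction carried out in Lemmas \ref{determinantnonzeroequivalence} and \ref{determinantnonzeropolynomial}. Next I would invoke the sufficiency direction of Theorem \ref{necsuffN-kdetthm}: since $\det D \neq 0$, the nonhomogeneous BVP \eqref{nonhomogeneouskNk} has a unique solution for each choice of boundary data $A_i, B_{j_m} \in \R$. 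The desired conclusion follows at once.

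There is essentially no genuine obstacle here, as the substantive work was already discharged in establishing the two cited theorems: the Fredholm-alternative argument of Theorem \ref{existenceuniquenessbvpkNk} (which underlies Theorem \ref{necsuffN-kdetthm}) on one side, and the determinant computation reducing $\det D$ to a nonvanishing Vandermonde-type product on the other. The only point meriting a moment's care is a bookkeeping one, namely that the matrix $D$ whose nonsingularity is guaranteed by Theorem \ref{Dnotzero} is literally the same matrix appearing in the criterion of Theorem \ref{necsuffN-kdetthm}; this is immediate since both statements reference the identical display \eqref{nonzeroN-kdet}. Consequently the proof is a one-line deduction combining Theorems \ref{necsuffN-kdetthm} and \ref{Dnotzero}.
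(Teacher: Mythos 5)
Your proposal is correct and follows exactly the paper's own argument: the paper derives Theorem \ref{uniquebvpsolndetDnonzero} precisely by combining the criterion of Theorem \ref{necsuffN-kdetthm} with the nonvanishing of $\det D$ from Theorem \ref{Dnotzero}. Nothing is missing, and your remark that both cited results refer to the same matrix $D$ in \eqref{nonzeroN-kdet} is the only detail that needs checking.
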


The function $G: \N_{a-N+1}^b \times \N_{a+1}^b \to \R$ given in the next theorem is called the \textit{Green's function} for the homogeneous BVP \eqref{homogeneouskNk}. Note that the Green's function is used to find the unique solution to the nonhomogeneous BVP \eqref{nonhomogeneouskNk}.

\begin{theorem} \label{GreensfunctionthmkNk} Let $\nu > 1$ and $N:=\Ceil{\nu}$. Assume $k \in \N_{1}^{N-1}$,  $j_m \in \N_0^{N-1}$ for  $m \in \N_1^{N-k},$ with $j_1 < j_2 < \cdots < j_{N-k}$, and \\ $b-a \in \N_{\max\{1,j_{N-k}-N+k+1\}}$. For each fixed $s \in \N_{a+1}^b$, let $u(t,s)$ be defined as the solution to the BVP
\begin{equation} \label{ubvp}
\begin{cases}
\nabla_{a*}^{\nu} u(t,s) = 0, &t \in \N_{a+1}^b\\
\nabla^i u(a-N+k,s) = 0, &i \in \N_0^{k-1} \\
\nabla^{j_m} u(b,s) = -\nabla^{j_m} H_{\nu-1}(b,\rho(s)), &m \in \N_1^{N-k}.
\end{cases}
\end{equation}
Define \begin{equation} \label{GreenskNk}
G(t,s):= \begin{cases} u(t,s), \text{ if } t \leq \rho(s) \\
v(t,s), \text{ if } t \geq \rho(s),
\end{cases}
\end{equation}
where $v(t,s):= u(t,s) + H_{\nu-1}(t, \rho(s))$ and $(t,s) \in \N_{a-N+1}^b \times \N_{a+1}^b$. Then,
$w(t) := \int_a^b G(t,s) h(s) \nabla s$ is the unique solution to the nonhomogeneous $(k, N-k)$ BVP \eqref{nonhomogeneouskNk} with $A_i, B_{j_m} = 0$, for $i \in \N_0^{k-1}$ and $m \in \N_1^{N-k}$. 
\end{theorem}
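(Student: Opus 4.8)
The plan is to verify directly that $w(t):=\int_a^b G(t,s)h(s)\nabla s$ satisfies the difference equation together with both families of boundary conditions in \eqref{nonhomogeneouskNk} (with every $A_i,B_{j_m}=0$), and then to invoke Theorem \ref{uniquebvpsolndetDnonzero} to conclude that $w$ is \emph{the} solution. The first move is to split off the singular part of the kernel. Writing $U(t):=\int_a^b u(t,s)h(s)\nabla s$, the definition \eqref{GreenskNk} gives $G(t,s)-u(t,s)=H_{\nu-1}(t,\rho(s))$ for $t\geq\rho(s)$ and $G(t,s)-u(t,s)=0$ for $t\leq\rho(s)$, the two cases agreeing on the overlap $t=\rho(s)$ because $H_{\nu-1}(\rho(s),\rho(s))=0$ (Theorem \ref{taylormonprops}(1), since $\nu\neq1$). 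Summing over $s$, the contribution of $G-u$ comes only from those $s$ with $\rho(s)\leq t$, i.e. $s\leq t+1$, and the $s=t+1$ term vanishes; hence $\int_a^b[G(t,s)-u(t,s)]h(s)\nabla s=\sum_{s=a+1}^{t}H_{\nu-1}(t,\rho(s))h(s)=\nabla_a^{-\nu}h(t)$, giving the decomposition $w(t)=U(t)+\nabla_a^{-\nu}h(t)$ for $t\in\N_{a+1}^b$. Moreover, for $t\leq a$ one has $t\leq\rho(s)$ for every $s\in\N_{a+1}^b$, so $G(t,s)=u(t,s)$ and thus $w(t)=U(t)$ there, consistent with $\nabla_a^{-\nu}h(t)=0$ for $t\leq a$; so $w=U+\nabla_a^{-\nu}h$ on all of $\N_{a-N+1}^b$.

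With this decomposition the difference equation is immediate: $U$ is a finite linear combination, in the constants $h(s)$, of the functions $u(\cdot,s)$, each solving $\nabla_{a*}^{\nu}u(\cdot,s)=0$, so by linearity of $\nabla_{a*}^{\nu}$ we get $\nabla_{a*}^{\nu}U(t)=0$ for $t\in\N_{a+1}^b$; combining this with $\nabla_{a*}^{\nu}\nabla_a^{-\nu}h(t)=h(t)$, which is \eqref{hparticularsoln}, yields $\nabla_{a*}^{\nu}w(t)=h(t)$. For the left-hand boundary conditions, observe that the points $a-N+k,\,a-N+k-1,\dots,a-N+1$ needed to form $\nabla^i w(a-N+k)$ for $i\in\N_0^{k-1}$ all lie in the region $t\leq a$, where $w=U$; hence $\nabla^i w(a-N+k)=\sum_{s}h(s)\,\nabla^i u(a-N+k,s)=0$, using the initial conditions imposed on $u$ in \eqref{ubvp}.

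The remaining, and most delicate, step is the right-hand boundary conditions $\nabla^{j_m}w(b)=0$. Here I would use the decomposition together with the conditions on $u$ at $t=b$, which were chosen precisely so that $\nabla^{j_m}u(b,s)=-\nabla^{j_m}H_{\nu-1}(b,\rho(s))=-H_{\nu-1-j_m}(b,\rho(s))$ by Theorem \ref{taylormonprops}(2), so that $\nabla^{j_m}U(b)=-\int_a^b H_{\nu-1-j_m}(b,\rho(s))h(s)\nabla s$. The crux is the matching identity $\nabla^{j_m}\bigl[\nabla_a^{-\nu}h\bigr](b)=\int_a^b H_{\nu-1-j_m}(b,\rho(s))h(s)\nabla s$, after which the two contributions to $\nabla^{j_m}w(b)=\nabla^{j_m}U(b)+\nabla^{j_m}[\nabla_a^{-\nu}h](b)$ cancel. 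To prove this identity I would note that on the range $t\in\N_{b-N+1}^b$, which contains all of $b,b-1,\dots,b-j_m$ since $j_m\leq N-1$, the monomial $H_{\nu-1}(t,\rho(s))$ vanishes for every $s$ with $\rho(s)\geq t$ in range (the only possibly surviving indices would require $s\geq t+N>b$), so the moving-limit sum $\nabla_a^{-\nu}h(t)=\int_a^t H_{\nu-1}(t,\rho(s))h(s)\nabla s$ coincides there with the fixed-limit expression $\int_a^b H_{\nu-1}(t,\rho(s))h(s)\nabla s$; then $\nabla^{j_m}$ passes through the fixed limits onto the monomials and produces $H_{\nu-1-j_m}(b,\rho(s))$ via Theorem \ref{taylormonprops}(2). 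Equivalently, one can iterate the Leibniz rule (Theorem \ref{Leibniz}) $j_m$ times, checking that each boundary term $H_{\nu-1-\ell}(\rho(t),\rho(t))$ vanishes because $j_m\leq N-1<\nu$ forces $\nu-1-\ell\neq0$. I expect this interchange—relating the moving-limit fractional sum defining $\nabla_a^{-\nu}h$ to the fixed-limit expression on which $\nabla^{j_m}$ acts cleanly, while keeping track of the fact that $\nabla_{a*}^{\nu}$ and the monomials $H_{\nu-1}$ reach below $t=a$—to be the main technical obstacle; everything else is linearity and bookkeeping. Finally, since $w$ solves \eqref{nonhomogeneouskNk} with zero data and that BVP has a unique solution by Theorem \ref{uniquebvpsolndetDnonzero}, $w$ is the solution, completing the proof.
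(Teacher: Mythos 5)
Your proposal is correct, and its skeleton is the same as the paper's: the decomposition $w=U+\nabla_a^{-\nu}h$ obtained by absorbing the kernel jump $H_{\nu-1}(t,\rho(s))$ into a fractional sum, the difference equation via linearity plus \eqref{hparticularsoln}, the left boundary conditions via the vanishing of $\nabla_a^{-\nu}h$ at and below $t=a$, and uniqueness from Theorem \ref{uniquebvpsolndetDnonzero}. The one genuine difference is how you handle the step you correctly flagged as the crux, namely $\nabla^{j_m}\bigl[\nabla_a^{-\nu}h\bigr](b)$. The paper iterates the nabla Leibniz formula (Theorem \ref{Leibniz}) $j_m$ times, discarding at each stage a boundary term of order $\nu-1-\ell$ evaluated at equal arguments, which vanishes since $\ell\le j_m-1\le N-2<\nu-1$ keeps the order nonzero. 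Your primary route instead observes that on the window $t\in\N_{b-N+1}^b$ (which contains all points $b,b-1,\dots,b-j_m$ needed to form $\nabla^{j_m}$ at $b$) the kernel $H_{\nu-1}(t,\rho(s))$ vanishes for every $s$ with $t+1\le s\le t+N-1$, so the moving-limit sum agrees there with the fixed-limit sum $\int_a^b H_{\nu-1}(t,\rho(s))h(s)\nabla s$, through which $\nabla^{j_m}$ passes term by term. Both arguments are valid; yours trades the repeated Leibniz boundary-term bookkeeping for a single support observation about the Taylor monomials above the diagonal, and it uniformly covers any points of the window at or below $a$, where the convention $\nabla_a^{-\nu}h=0$ matches the vanishing of the fixed sum. (You also name the Leibniz iteration as an equivalent alternative, which is exactly the paper's computation.) The only detail you take for granted that the paper states explicitly is that $u(\cdot,s)$ is well defined, i.e. that the BVP \eqref{ubvp} has a unique solution for each fixed $s$; this, too, is supplied by Theorem \ref{uniquebvpsolndetDnonzero}.
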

\begin{proof} By Theorem \ref{uniquebvpsolndetDnonzero}, the BVP \eqref{ubvp}, for each fixed $s \in \N_{a+1}^b$, has a unique solution, so $u(t,s)$ is well defined. Let $G(t,s)$ be defined as in \eqref{GreenskNk} and $w(t):=\int_a^b G(t,s) h(s)\nabla s$.
First, for $t \in \N_{a-N+1}^b$, 
\begin{align*}
w(t) 
&=\int_a^{t} v(t,s) h(s) \nabla s + \int_{t}^b u(t,s) h(s) \nabla s\\
&=\int_a^b u(t,s) h(s) \nabla s + \int_a^{t} H_{\nu-1}(t,\rho(s)) h(s) \nabla s \\
&\overset{\eqref{fracsumform}}{=}\int_a^b u(t,s) h(s) \nabla s + \nabla_a^{-\nu} h(t).
\end{align*}

For $t \in \N_{a+1}$,
\begin{align*}
\nabla_{a*}^{\nu} w(t) & = \nabla_{a*}^{\nu}\left[\int_a^b u(t,s) h(s) \nabla s + \nabla_a^{-\nu} h(t)\right] \\
&= \Sum_{s=a+1}^b \nabla_{a*}^{\nu} u(t,s) h(s) + \nabla_{a*}^{\nu} \nabla_{a}^{-\nu} h(t) \\
&\overset{\eqref{ubvp}, \hspace{1pt}\eqref{hparticularsoln}}{=} h(t).
\end{align*}
Since $\nabla_a^{-\nu} h(a-N+1) = \cdots = \nabla_a^{-\nu} h(a) = 0$ by convention, in particular, we get $\nabla^i(\nabla_a^{-\nu}h)(a-N+k) = 0$ for $ i \in \N_0^{k-1}$. Thus, for $i \in \N_0^{k-1}$,
\begin{align*}
\nabla^i w(t)|_{t=a-N+k}&= \int_a^b \nabla^i u(a-N+k,s) h(s) \nabla s + \nabla^i(\nabla_a^{-\nu} h)(a-N+k)\\
 &\overset{\eqref{ubvp}}{=} 0.
\end{align*} 
Moreover, for $j_m \in \N_0^{N-1}$, $m \in \N_1^{N-k}$,
\begin{align*}
\nabla^{j_m} w(t)|_{t=b} &= \int_a^b \nabla_t^{j_m} u(t,s) h(s) \nabla s\Big|_{t=b} + \nabla^{j_m}\left[ \int_a^t H_{\nu-1}(t,\rho(s)) h(s) \nabla s \right]\Big|_{t=b} \\
&\overset{\eqref{leibnizformula}}{=} \int_a^b \nabla_t^{j_m}u(t,s)h(s) \nabla s\Big|_{t=b}   \\
&  + \left[\nabla^{j_m-1}\int_a^t \nabla_t H_{\nu-1}(t,\rho(s))h(s) \nabla s + H_{\nu-1}(\rho(t), \rho(t)) h(t) \right]\Big|_{t=b} \\
 &= \int_a^b \nabla_t^{j_m}u(t,s)h(s) \nabla s\Big|_{t=b} \\
 &+  \left[\nabla^{j_m-1}\int_a^t \nabla_t H_{\nu-1}(t,\rho(s))h(s)\nabla s\right]\Big|_{t=b} \\
& \quad \vdots \\
&\overset{\eqref{leibnizformula}}{=}\int_a^b \nabla_t^{j_m} u(t,s)h(s) \nabla s \Big|_{t=b}  \\
& + \left[ \int_a^t \nabla_t^{j_m} H_{\nu-1}(t,\rho(s))h(s) \nabla s + \nabla_t^{j_m-1}H_{\nu-1}(\rho(t), \rho(t)) h(t) \right]\Big|_{t=b} \\
&\overset{\eqref{ubvp}}{=}\int_a^b -\nabla_t^{j_m}H_{\nu-1}(b,\rho(s)) h(s) \nabla s  \\
&+ \int_a^b \nabla_t^{j_m} H_{\nu-1}(b, \rho(s)) h(s) \nabla s \\ 
&= 0.
\end{align*} 
\end{proof}

The proof of the following corollary is standard and follows in a straightforward manner from Theorem \ref{GreensfunctionthmkNk}.

\begin{corollary} \label{solntononhomkNkaddw} Assume that the hypotheses of Theorem \ref{GreensfunctionthmkNk} hold. Also, let $h: \N_{a+1}^b \to \R$, $G(t,s)$ be as defined in \eqref{GreenskNk}, and $w$ be the unique solution to the BVP 
\begin{equation*} 
\begin{cases} \nabla_{a*}^{\nu} w(t) = 0,  &t \in \N_{a+1}^b \\
\nabla^i w(a-N+k) = A_i,  &i \in \N_0^{k-1} \\
\nabla^{j_m}w(b) = B_{j_m},  &m \in \N_1^{N-k}.
\end{cases} \end{equation*} Then, the unique solution to the nonhomogeneous BVP
\[\begin{cases} \nabla_{a*}^{\nu} y(t) = h(t), &t \in \N_{a+1}^b \\
\nabla^i y(a-N+k) = A_i, &i \in \N_0^{k-1} \\
\nabla^{j_m}y(b) = B_{j_m}, &m \in \N_1^{N-k},
\end{cases}\] 
is given by $y(t):= w(t) + \int_a^b G(t,s) h(s) \nabla s. $
\end{corollary}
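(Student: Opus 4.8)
The plan is to prove this corollary by the standard superposition principle, leveraging the linearity of the nabla Caputo operator together with the two uniqueness results already in hand. First I would set $z(t) := \int_a^b G(t,s) h(s) \nabla s$ and recall that, by Theorem \ref{GreensfunctionthmkNk}, $z$ is precisely the unique solution of the nonhomogeneous BVP \eqref{nonhomogeneouskNk} in the special case $A_i = B_{j_m} = 0$. That is, $z$ satisfies $\nabla_{a*}^{\nu} z(t) = h(t)$ on $\N_{a+1}^b$ together with the homogeneous boundary conditions $\nabla^i z(a-N+k) = 0$ for $i \in \N_0^{k-1}$ and $\nabla^{j_m} z(b) = 0$ for $m \in \N_1^{N-k}$.

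Next I would form $y := w + z$ and verify directly that it meets all three requirements of the target BVP. Using linearity of $\nabla_{a*}^{\nu}$ (exactly as invoked in the proof of Theorem \ref{caputogensolntaylor}), I get $\nabla_{a*}^{\nu} y(t) = \nabla_{a*}^{\nu} w(t) + \nabla_{a*}^{\nu} z(t) = 0 + h(t) = h(t)$ for $t \in \N_{a+1}^b$. Since each $\nabla^i$ and each $\nabla^{j_m}$ is also a linear operator, the left-endpoint conditions yield $\nabla^i y(a-N+k) = A_i + 0 = A_i$ for $i \in \N_0^{k-1}$, and the right-endpoint conditions yield $\nabla^{j_m} y(b) = B_{j_m} + 0 = B_{j_m}$ for $m \in \N_1^{N-k}$. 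Hence $y = w + z$ is a solution of the stated nonhomogeneous BVP.

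Finally, for uniqueness I would appeal to Theorem \ref{uniquebvpsolndetDnonzero} (equivalently, to the existence-uniqueness dichotomy of Theorem \ref{existenceuniquenessbvpkNk} combined with Theorem \ref{Dnotzero}), which guarantees that the nonhomogeneous $(k,N-k)$ BVP has exactly one solution for each prescribed choice of data $A_i, B_{j_m}$. Because $y = w + z$ is a solution, it is therefore \emph{the} solution, which gives the claimed formula $y(t) = w(t) + \int_a^b G(t,s) h(s) \nabla s$.

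I expect no genuine obstacle in this argument; it is the routine linearity/superposition computation the remark preceding the corollary already anticipates. The only points requiring care are citing that the Green's-function integral produces the particular solution carrying \emph{zero} boundary data (not the prescribed data), and noting that the existence of the auxiliary function $w$ itself is covered by Theorem \ref{uniquebvpsolndetDnonzero} applied with $h \equiv 0$, so the hypotheses are internally consistent.
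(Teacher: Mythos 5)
Your proof is correct and is exactly the standard superposition argument the paper has in mind: the paper omits this proof, stating only that it ``is standard and follows in a straightforward manner from Theorem \ref{GreensfunctionthmkNk}.'' Your write-up supplies precisely that argument (Green's-function integral as the zero-data particular solution, linearity for the sum, and Theorem \ref{uniquebvpsolndetDnonzero} for uniqueness), with the added care of noting that $w$ itself exists by the $h\equiv 0$ case.
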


\begin{theorem} \label{Gdeterminant}
Assume that the hypotheses of Theorem \ref{GreensfunctionthmkNk} hold. Then, the Green's function for the $(k, N-k)$ BVP \eqref{homogeneouskNk} is given by \eqref{GreenskNk}, where $u(t,s)=$
\begin{equation}\label{udeterminant}
\frac{1}{\beta} \left| \begin{matrix} 0 & H_k(t,\alpha)  & \cdots & H_{N-1}(t,a-1) \\
\nabla^{j_1}H_{\nu-1}(b,\rho(s)) & \nabla^{j_1}H_{k}(b,\alpha) & \cdots & \nabla^{j_1}H_{N-1}(b,a-1)\\
\nabla^{j_2} H_{\nu-1}(b,\rho(s)) & \nabla^{j_2}H_k(b, \alpha) & \cdots & \nabla^{j_2}H_{N-1}(b,a-1) \\
\vdots & \vdots & \ddots  & \vdots \\
\nabla^{j_{N-k}} H_{\nu-1}(b,\rho(s)) & \nabla^{j_{N-k}} H_k(b,\alpha)  & \cdots & \nabla^{j_{N-k}}H_{N-1}(b,a-1)\end{matrix} \right|, 
\end{equation}
for $(t,s) \in \N_{a-N+1}^b \times \N_{a+1}^b$; with $\beta:= \det D$,  where $D$ is given by \eqref{nonzeroN-kdet}; $v(t,s):= u(t,s) + H_{\nu-1}(t,\rho(s))$; and $\alpha:=a-N+k$.
 \end{theorem}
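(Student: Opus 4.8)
The plan is to lean on the uniqueness statement of Theorem \ref{uniquebvpsolndetDnonzero}: for each fixed $s \in \N_{a+1}^b$ the BVP \eqref{ubvp} has exactly one solution, so it suffices to check that the function $u(t,s)$ defined by the determinant in \eqref{udeterminant} satisfies all three conditions in \eqref{ubvp}. Once that is verified, $u(t,s)$ agrees with the solution named in Theorem \ref{GreensfunctionthmkNk}, and therefore $G(t,s)$ assembled from it via \eqref{GreenskNk} is the Green's function. Throughout I would use that $\beta := \det D \neq 0$ by Theorem \ref{Dnotzero}, so that dividing by $\beta$ is legitimate.

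First I would dispatch the equation $\nabla_{a*}^{\nu} u(t,s) = 0$ together with the left boundary conditions $\nabla^i u(a-N+k,s) = 0$, $i \in \N_0^{k-1}$. Since the only $t$-dependence in \eqref{udeterminant} sits in the first row and the $(1,1)$-entry is $0$, expanding along the first row gives
\[ u(t,s) = \frac{1}{\beta} \sum_{p=k}^{N-1} C_p(s)\, H_p(t,a-N+p), \]
where each cofactor $C_p(s)$ is independent of $t$. Thus $u(\cdot,s)$ is a linear combination, with constant-in-$t$ coefficients, of the monomials $H_p(t,a-N+p)$ for $p \in \N_k^{N-1}$. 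By \eqref{applycaputototaylormon} each such monomial is annihilated by $\nabla_{a*}^{\nu}$, so by linearity $\nabla_{a*}^{\nu} u(t,s) = 0$; and, exactly as in the computation in the proof of Lemma \ref{LeftBCssimplifysoln}, each $H_p(t,a-N+p)$ with $p \geq k$ satisfies $\nabla^i x(a-N+k) = 0$ for $i \in \N_0^{k-1}$, so $u$ inherits the left boundary conditions.

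Next I would verify the right boundary conditions $\nabla^{j_m} u(b,s) = -\nabla^{j_m} H_{\nu-1}(b,\rho(s))$ for $m \in \N_1^{N-k}$. Fix $m$; since the determinant is linear in its first row and $\nabla^{j_m}$ is linear, applying $\nabla^{j_m}$ in $t$ and evaluating at $t=b$ replaces the first row by $\bigl(0,\ \nabla^{j_m}H_k(b,\alpha),\ \ldots,\ \nabla^{j_m}H_{N-1}(b,a-1)\bigr)$, which coincides with row $m+1$ of the matrix in every entry except the first. Subtracting row $m+1$ from this new first row leaves the determinant unchanged and collapses the first row to $\bigl(-\nabla^{j_m}H_{\nu-1}(b,\rho(s)),\ 0,\ \ldots,\ 0\bigr)$. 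Cofactor expansion along this row retains only the $(1,1)$-minor, which is exactly the matrix $D$ of \eqref{nonzeroN-kdet}; hence the determinant equals $-\nabla^{j_m}H_{\nu-1}(b,\rho(s))\cdot \det D = -\beta\,\nabla^{j_m}H_{\nu-1}(b,\rho(s))$, and dividing by $\beta$ produces the required boundary value.

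The main obstacle is purely organizational: keeping the row/column indexing and the cofactor sign conventions consistent, so that the reduced $(1,1)$-minor is correctly identified as $D$ and the overall sign works out to $-1$ times $\nabla^{j_m}H_{\nu-1}(b,\rho(s))$. The analytic content is minimal once one recognizes \eqref{udeterminant} as the bordered form of the coefficient matrix $D$, with the Taylor-monomial vector $\bigl(H_k(t,\alpha),\ldots,H_{N-1}(t,a-1)\bigr)$ and the data vector $\bigl(\nabla^{j_m}H_{\nu-1}(b,\rho(s))\bigr)_{m}$ placed in the border; the two row-coincidence trick is precisely what encodes Cramer's rule for the system $D\mathbf{c} = -\bigl(\nabla^{j_m}H_{\nu-1}(b,\rho(s))\bigr)_m$ arising from the right boundary conditions.
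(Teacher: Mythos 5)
Your proposal is correct and takes essentially the same route as the paper: expand the determinant \eqref{udeterminant} along its first row to see that $u(\cdot,s)$ is a constant-coefficient combination of $H_p(t,a-N+p)$, $p \in \N_k^{N-1}$ (hence solves $\nabla_{a*}^{\nu}x=0$ and satisfies the left boundary conditions), then use a repeated-row determinant argument to get $\nabla^{j_m}u(b,s) = -\nabla^{j_m}H_{\nu-1}(b,\rho(s))$, and conclude by the uniqueness of the solution to \eqref{ubvp}. The only cosmetic difference is that the paper routes the second step through the auxiliary bordered determinant $z(t,s) = u(t,s) + H_{\nu-1}(t,\rho(s))$ and notes $\nabla^{j_m}z(b,s)=0$ because two rows coincide, whereas you subtract row $m+1$ from the differentiated first row inside $\nabla^{j_m}u(b,s)$ directly; these are the same computation.
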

 \begin{proof}
Let $u(t,s)$ be given by \eqref{udeterminant}. By Theorem \ref{Dnotzero}, $\beta \not = 0$, so $u$ is well defined. Then, expanding $u(t,s)$ along the first row, for each fixed $s$, $u(t,s)$ is a linear combination of $H_{k}(t,a-N+k), H_{k+1}(t,a-N+k+1), \hdots, H_{N-1}(t,a-1)$. Hence, for each fixed $s$, $u(t,s)$ is a solution to $\nabla_{a*}^{\nu} x(t) = 0$.  Note that  $\nabla^i H_{p}(a-N+k,a-N+p)= 0$ for each $i \in \N_0^{k-1}$ and $p \in \N_k^{N-1}$, so we have $\nabla^i u(a-N+k,s) = 0$ for each $i \in \N_0^{k-1}$. Hence, $u(t,s)$ satisfies the boundary conditions at $t=a-N+k$ given in \eqref{ubvp}. 
 
 Next, define $z(t,s):=$
 \[ \frac{1}{\beta}\left| \begin{matrix} H_{\nu-1}(t,\rho(s)) & H_k(t,\alpha) & \cdots & H_{N-1}(t,a-1) \\
\nabla^{j_1}H_{\nu-1}(b,\rho(s)) & \nabla^{j_1}H_{k}(b,\alpha) & \cdots & \nabla^{j_1}H_{N-1}(b,a-1)\\
\nabla^{j_2} H_{\nu-1}(b,\rho(s)) & \nabla^{j_2}H_k(b, \alpha)   & \cdots & \nabla^{j_2}H_{N-1}(b,a-1) \\
\vdots & \vdots & \ddots & \vdots \\
\nabla^{j_{N-k}} H_{\nu-1}(b,\rho(s)) & \nabla^{j_{N-k}} H_k(b,\alpha)  & \cdots & \nabla^{j_{N-k}}H_{N-1}(b,a-1)\end{matrix} \right|, \] where $\alpha=a-N+k$. Expanding $z(t,s)$ along the first row, we have
$z(t,s) = $

\[\frac{1}{\beta} H_{\nu-1}(t,\rho(s)) \left| \begin{matrix}
\nabla^{j_1}H_{k}(b,\alpha) & \cdots & \nabla^{j_1}H_{N-1}(b,a-1)\\
 \nabla^{j_2}H_k(b, \alpha)   & \cdots & \nabla^{j_2}H_{N-1}(b,a-1) \\
\vdots  & \ddots & \vdots \\
 \nabla^{j_{N-k}} H_k(b,\alpha) & \cdots & \nabla^{j_{N-k}}H_{N-1}(b,a-1)\end{matrix} \right| \]
 
 \[ + \frac{1}{\beta}\left| \begin{matrix} 0 & H_k(t,\alpha) & \cdots & H_{N-1}(t,a-1) \\
\nabla^{j_1}H_{\nu-1}(b,\rho(s)) & \nabla^{j_1}H_{k}(b,\alpha) & \cdots & \nabla^{j_1}H_{N-1}(b,a-1)\\
\nabla^{j_2} H_{\nu-1}(b,\rho(s)) & \nabla^{j_2}H_k(b, \alpha)   & \cdots & \nabla^{j_2}H_{N-1}(b,a-1) \\
\vdots & \vdots & \ddots & \vdots \\
\nabla^{j_{N-k}} H_{\nu-1}(b,\rho(s)) & \nabla^{j_{N-k}} H_k(b,\alpha) & \cdots & \nabla^{j_{N-k}}H_{N-1}(b,a-1)\end{matrix} \right|. \]
Hence, we have $z(t,s) = H_{\nu-1}(t,\rho(s)) + u(t,s).$

Next, for $m \in \N_1^{N-k}$, $\nabla^{j_m} z(b,s)=$
\[ \frac{1}{\beta}\left| \begin{matrix} \nabla^{j_m}H_{\nu-1}(b,\rho(s)) & \nabla^{j_m}H_k(b,\alpha)  & \cdots & \nabla^{j_m}H_{N-1}(b,a-1) \\
\nabla^{j_1}H_{\nu-1}(b,\rho(s)) & \nabla^{j_1}H_{k}(b,\alpha)& \cdots & \nabla^{j_1}H_{N-1}(b,a-1)\\
\nabla^{j_2} H_{\nu-1}(b,\rho(s)) & \nabla^{j_2}H_k(b, \alpha) & \cdots & \nabla^{j_2}H_{N-1}(b,a-1) \\
\vdots & \vdots  & \ddots & \vdots \\
\nabla^{j_{N-k}} H_{\nu-1}(b,\rho(s)) & \nabla^{j_{N-k}} H_k(b,\alpha) & \cdots & \nabla^{j_{N-k}}H_{N-1}(b,a-1)\end{matrix} \right|. \] 
Hence,  $\nabla^{j_m} z(b,s)= 0$ for each $m \in \N_{1}^{N-k}$. Since $z(t,s) = H_{\nu-1}(t,\rho(s)) + u(t,s)$, this means for each $m \in \N_1^{N-k}$,
$\nabla^{j_m} u(b,s) = - \nabla^{j_m} H_{\nu-1}(b,\rho(s)).$
Therefore, we have that $u(t,s)$ satisfies the boundary conditions at $t = b$ in \eqref{ubvp}. Thus, the result follows by Theorem \ref{GreensfunctionthmkNk}.
 \end{proof}

 In the next theorem, we apply Theorems \ref{GreensfunctionthmkNk} and \ref{Gdeterminant} to the special case of the BVP \eqref{eq2nthgenRF}, which confirms \cite[Theorem 4.6]{jul}.

\begin{theorem} \label{nthordersolngenRF}
 Consider the nabla Caputo $(N-1, 1)$  BVP 
 \begin{equation}\label{eq2nthgenRF}
 \begin{cases}
 -\nabla_{a*}^{\nu} x(t) = h(t),  &t \in \N_{a+1}^{b} \\
  \nabla^i x(a-1)=0,  &i \in \N_0^{N-2} \\
 \nabla^j x(b) = 0,
 \end{cases}
 \end{equation} with $\nu > 1$, $N:= \Ceil{\nu}$, $j \in \N_0^{N-1}$ fixed, $b-a \in \N_{\max\{1,j\}}$, and $h: \N_{a+1}^{b} \to \R$. Then, $x: \N_{a-N+1}^b \to \R$ is a solution to the $(N-1, 1)$  BVP \eqref{eq2nthgenRF} if and only if $x(t)$ satisfies the integral equation
 \begin{equation} \label{inteqnthgenRF}
 x(t) = \int_a^b G_{\nu}(t,s)h(s) \nabla s, 
 \end{equation}
 for $t \in \N_{a-N+1}^b$, where $G_{\nu}: \N_{a-N+1}^b \times \N_{a+1}^b \to \R$ is given by
 \begin{equation} \label{greensnthgenRF}
 G_{\nu}(t,s)= \begin{cases} \frac{H_{N-1}(t,a-1)H_{\nu-j-1}(b,\rho(s))}{H_{N-j-1}(b,a-1)}, & t \leq \rho(s) \\
\frac{H_{N-1}(t,a-1)H_{\nu-j-1}(b,\rho(s))}{H_{N-j-1}(b,a-1)} - H_{\nu-1}(t,\rho(s)), & t \geq \rho(s). \end{cases}
  \end{equation}
  \end{theorem}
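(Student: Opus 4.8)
The plan is to recognize \eqref{eq2nthgenRF} as the specialization of the general $(k,N-k)$ boundary value problem \eqref{nonhomogeneouskNk} obtained by taking $k = N-1$ (so that $N-k = 1$), imposing the single right-hand boundary condition with $j_1 = j$, and setting all boundary data $A_i, B_{j_m}$ to zero. With $k = N-1$ the left-endpoint base point is $\alpha = a - N + k = a - 1$, so the conditions $\nabla^i x(a-1) = 0$ for $i \in \N_0^{N-2}$ are exactly those of \eqref{homogeneouskNk}, and the single condition $\nabla^j x(b) = 0$ is the $m=1$ condition there. One checks that the hypothesis $b - a \in \N_{\max\{1,j\}}$ is precisely the general requirement $b - a \in \N_{\max\{1, j_{N-k} - N + k + 1\}}$ in this case, so all hypotheses of Theorems \ref{GreensfunctionthmkNk} and \ref{Gdeterminant} are in force. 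The one point requiring care is the leading minus sign: the equation $-\nabla_{a*}^{\nu} x(t) = h(t)$ is $\nabla_{a*}^{\nu} x(t) = -h(t)$, so I will apply the Green's function machinery to the right-hand side $-h$ and absorb the sign at the end.

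Next I would compute the Green's function from the determinant formula \eqref{udeterminant}. Since $N - k = 1$, the quantity $\beta = \det D$ of \eqref{nonzeroN-kdet} collapses to the $1 \times 1$ determinant $\nabla^{j} H_{N-1}(b,a-1)$, which by repeated use of Theorem \ref{taylormonprops}, part (2), equals $H_{N-j-1}(b,a-1)$; this is nonzero by Theorem \ref{Dnotzero}. Likewise, the determinant in \eqref{udeterminant} reduces to the $2 \times 2$ determinant
\[
u(t,s) = \frac{1}{\beta}\left| \begin{matrix} 0 & H_{N-1}(t,a-1) \\ \nabla^{j} H_{\nu-1}(b,\rho(s)) & \nabla^{j} H_{N-1}(b,a-1) \end{matrix} \right| = -\frac{H_{N-1}(t,a-1)\, \nabla^{j} H_{\nu-1}(b,\rho(s))}{H_{N-j-1}(b,a-1)}.
\]
Using Theorem \ref{taylormonprops}, part (2), once more to write $\nabla^{j} H_{\nu-1}(b,\rho(s)) = H_{\nu-j-1}(b,\rho(s))$, this becomes $u(t,s) = -\frac{H_{N-1}(t,a-1) H_{\nu-j-1}(b,\rho(s))}{H_{N-j-1}(b,a-1)}$, and correspondingly $v(t,s) = u(t,s) + H_{\nu-1}(t,\rho(s))$.

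Finally, let $\widetilde{G}(t,s)$ denote the Green's function of \eqref{GreenskNk} built from this $u$ and $v$; by Theorem \ref{GreensfunctionthmkNk}, $\int_a^b \widetilde{G}(t,s)\, g(s) \nabla s$ is the unique solution of $\nabla_{a*}^{\nu} w = g$ under the homogeneous boundary conditions. Applying this with $g = -h$ and defining $G_{\nu} := -\widetilde{G}$, I obtain that $x(t) = \int_a^b G_{\nu}(t,s) h(s)\nabla s$ is the unique solution of \eqref{eq2nthgenRF}; comparing $-u$ and $-v$ against \eqref{greensnthgenRF} shows that this $G_{\nu}$ is exactly the stated kernel. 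The biconditional then follows immediately from uniqueness (Theorem \ref{uniquebvpsolndetDnonzero}): the integral expression is a solution by construction, and any solution must coincide with it.

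I expect the only real obstacle to be bookkeeping rather than mathematics: correctly propagating the overall sign from $-\nabla_{a*}^{\nu}$ through the Green's function, and confirming that the base-point index $\alpha = a-1$ together with the identity $\nabla^{j} H_{\mu}(t,c) = H_{\mu-j}(t,c)$ turns the abstract determinants into the closed-form monomials appearing in \eqref{greensnthgenRF}.
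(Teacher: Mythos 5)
Your proposal is correct and follows exactly the route the paper intends: the paper states this theorem without a written proof, presenting it as the specialization of Theorems \ref{GreensfunctionthmkNk} and \ref{Gdeterminant} to $k = N-1$, $j_1 = j$, and your proposal simply carries out that specialization in detail (including the correct verification that $b-a \in \N_{\max\{1,j\}}$ matches the general hypothesis, the collapse of $\beta$ to $H_{N-j-1}(b,a-1)$ and of \eqref{udeterminant} to a $2\times 2$ determinant, and the sign bookkeeping $G_\nu = -\widetilde{G}$ forced by the leading minus in $-\nabla_{a*}^{\nu}x = h$). No gaps; this is the same argument, written out.
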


\section{Lyapunov Inequalities}


In this section, we will prove our main result involving Lyapunov inequalities in Theorem \ref{higherordersubsituting}. First, we give a theorem involving uniqueness of solutions to initial value problems and prove some important lemmas which we will use in the proof of Theorem \ref{higherordersubsituting}.

 \begin{theorem} \label{uniqueIVPs}
Let $\nu > 1$, $N:=\lceil \nu \rceil$, and $f: \N_{a+1} \to \R$. Then, the initial value problem
\begin{equation} \label{IVPeqn}
\begin{cases} 
\nabla_{a*}^{\nu} x(t) + q(t) x(t-1) = f(t), \,\ t \in \N_{a+1} \\
x(a-i) = A_i, \,\ i \in \N_0^{N-1}
\end{cases}
\end{equation}
has a unique solution defined on $\N_{a-N+1}$. 
\end{theorem}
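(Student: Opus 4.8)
The plan is to prove existence and uniqueness simultaneously by solving the equation forward along $\N_{a+1}$, exploiting the fact that the Caputo operator $\nabla_{a*}^{\nu}$ assigns a nonzero coefficient to the ``newest'' value $x(t)$. The $N$ initial conditions $x(a-i) = A_i$, $i \in \N_0^{N-1}$, prescribe $x$ on the $N$ consecutive points $a-N+1, a-N+2, \dots, a$. I would then show that, for each successive $t = a+1, a+2, \dots$, the equation $\nabla_{a*}^{\nu} x(t) + q(t) x(t-1) = f(t)$ determines the single new unknown $x(t)$ uniquely in terms of the previously determined values $x(a-N+1), \dots, x(t-1)$. Since the $A_i$ anchor the recursion and each step produces exactly one value, this yields a unique function $x$ defined on all of $\N_{a-N+1}$.

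The key step is to isolate the coefficient of $x(t)$ in $\nabla_{a*}^{\nu} x(t)$ and verify that it is nonzero. Using Definitions \ref{fracsum} and \ref{fracdif}, for $t \in \N_{a+1}$ and $\nu \notin \N$ (so that $N-\nu > 0$) I would write
\begin{equation*}
\nabla_{a*}^{\nu} x(t) = \nabla_a^{-(N-\nu)} \nabla^N x(t) = \Sum_{s=a+1}^{t} H_{N-\nu-1}(t,\rho(s))\, \nabla^N x(s).
\end{equation*}
Only the term $s = t$ can contribute $x(t)$, and there $H_{N-\nu-1}(t,\rho(t)) = H_{N-\nu-1}(t,t-1) = \frac{1\rp{N-\nu-1}}{\Gamma(N-\nu)} = 1$, since $1\rp{N-\nu-1} = \Gamma(N-\nu)$. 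By Proposition \ref{binomialnablaexpansion} the coefficient of $x(t)$ inside $\nabla^N x(t) = \Sum_{i=0}^N (-1)^i \binom{N}{i} x(t-i)$ is $(-1)^0\binom{N}{0} = 1$, while every remaining contribution (the $i \geq 1$ terms, together with all terms having $s < t$) involves $x$ only at arguments strictly less than $t$. Hence the coefficient of $x(t)$ in $\nabla_{a*}^{\nu} x(t)$ equals $1$. When $\nu \in \N$ one instead has $N = \nu$ and $\nabla_{a*}^{\nu} x(t) = \nabla^N x(t)$ by the convention $\nabla_a^{-0} f = f$, and the same binomial expansion again gives coefficient $1$.

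With this in hand I would rewrite the equation at each $t \in \N_{a+1}$ as
\begin{equation*}
x(t) = f(t) - q(t)\, x(t-1) - \big[\,\text{terms in } x(a-N+1), \dots, x(t-1)\,\big],
\end{equation*}
noting that the term $q(t)\, x(t-1)$ poses no obstruction: since $t-1 \geq a$, the value $x(t-1)$ has already been determined (indeed $x(a) = A_0$ when $t = a+1$), and the smallest argument appearing on the right-hand side is $(a+1)-N = a-N+1$, which lies in the domain. A straightforward induction on $t$ then shows that $x(t)$ exists and is uniquely determined for every $t \in \N_{a+1}$, completing the proof. The only real content is the coefficient computation of the previous paragraph; once the leading coefficient is known to be nonzero, the recursion is forced and both existence and uniqueness follow at once.
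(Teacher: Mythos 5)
Your proposal is correct and follows essentially the same route as the paper: expand $\nabla_{a*}^{\nu}x(t)$ as $\sum_{s=a+1}^{t} H_{N-\nu-1}(t,\rho(s))\,\nabla^{N}x(s)$ via the binomial formula and then solve forward by induction from the $N$ prescribed initial values. In fact, your explicit verification that the coefficient of the newest unknown $x(t)$ equals $1$ (the $s=t$, $i=0$ term, using $H_{N-\nu-1}(t,\rho(t))=1$) supplies precisely the detail the paper compresses into ``the result follows by induction.''
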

\begin{proof}
By the initial conditions in \eqref{IVPeqn}, $x(t)$ is uniquely defined for $t \in \N_{a-N+1}^a$. Expanding the operator $\nabla_{a*}^{\nu}$ gives
\begin{align*}
\nabla_{a*}^{\nu} x(t) &= \nabla_a^{-(N-\nu)}\nabla^N x(t) \\
&= \sum\limits_{s=a+1}^t H_{N-\nu-1}(t, \rho(s)) \sum\limits_{i=0}^N (-1)^i {N \choose i} x(s-i).
\end{align*} Hence, the equation in \eqref{IVPeqn} is equivalent to
\begin{equation} \label{IVPeqnexpanded}
\sum\limits_{s=a+1}^t H_{N-\nu-1}(t, \rho(s)) \sum\limits_{i=0}^N (-1)^i {N \choose i} x(s-i) + q(t) x(t-1) = f(t), \,\ t \in \N_{a+1}.
\end{equation} The result follows by induction on $k$, letting $t = a+k$ in \eqref{IVPeqnexpanded}. 
\end{proof}

In the next lemma, we will show that if there is a nontrivial solution to \eqref{eqmth}, it is not identically zero on the domain $\N_a^{b-1}$.

\begin{lemma} \label{higherordersubx(s-1)issue}
Let $\nu > 1$ and suppose $x:\N_{a-N+1}^b \to \R$ is a solution to the equation
\begin{equation} \label{eqmth}
\nabla_{a*}^{\nu}x(t) + q(t) x(t-1) = 0, \,\ t \in \N_{a+1}^b,
\end{equation}
where $\nu > 1$ and $N:=\Ceil{\nu}$. Assume $b-a \in \N_{N-1}$. If $x(t) = 0$ for all $t \in \N_a^{b-1}$, then $x(t) \equiv 0$ on $\N_{a-N+1}^b$.
\end{lemma}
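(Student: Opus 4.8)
The plan is to first strip the zeroth‑order term from \eqref{eqmth} and reduce the problem to a pure $N$‑th order difference equation, then exploit the polynomial nature of its solutions. Since $x(t)=0$ for every $t\in\N_a^{b-1}$, and since for $t\in\N_{a+1}^b$ we have $t-1\in\N_a^{b-1}$, the term $q(t)x(t-1)$ vanishes identically on $\N_{a+1}^b$; hence \eqref{eqmth} collapses to $\nabla_{a*}^{\nu}x(t)=0$ for $t\in\N_{a+1}^b$. I would then unwind the Caputo operator using $\nabla_{a*}^{\nu}x(t)=\sum_{s=a+1}^t H_{N-\nu-1}(t,\rho(s))\nabla^N x(s)$ (exactly as in the proof of Theorem \ref{uniqueIVPs}) and prove by induction on $t$ that $\nabla^N x(t)=0$ for all $t\in\N_{a+1}^b$. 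The key computational fact is that the top coefficient $H_{N-\nu-1}(t,\rho(t))=H_{N-\nu-1}(t,t-1)=1$ is nonzero, so at each step the relation $\sum_{s=a+1}^t H_{N-\nu-1}(t,\rho(s))\nabla^N x(s)=0$ solves uniquely for $\nabla^N x(t)$ in terms of lower‑index terms, all of which vanish by the inductive hypothesis.

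Once $\nabla^N x(t)=0$ on the whole block $\N_{a+1}^b$, I would invoke Theorem \ref{caputogensolntaylor}: $x$ must coincide on $\N_{a-N+1}^b$ with a function of the form $\sum_{p=0}^{N-1}c_p H_p(t,a-N+p)$, that is, with a polynomial in $t$ of degree at most $N-1$. Equivalently, $\nabla^N x\equiv 0$ on a contiguous block of $t$‑values forces $x$ to agree with a degree‑$\le N-1$ polynomial on the corresponding block of the domain, which here is all of $\N_{a-N+1}^b$.

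To finish, I would use the prescribed zeros: this polynomial vanishes at each of the $b-a$ distinct points $a,a+1,\dots,b-1$. A nonzero polynomial of degree at most $N-1$ has at most $N-1$ roots, so once $b-a\ge N$ it has at least $N$ roots and must be identically zero, giving $x\equiv 0$ on $\N_{a-N+1}^b$. Avoiding the degree count, one can instead run a triangular back‑substitution: reading $\nabla^N x(a+k)=0$ for $k=N-1,N-2,\dots,1$ isolates $x(a-1),x(a-2),\dots,x(a-N+1)$ successively (the coefficient of the lowest index being $\binom{N}{N}=1$), after which all $N$ initial values $x(a),x(a-1),\dots,x(a-N+1)$ are zero and Theorem \ref{uniqueIVPs} with $f\equiv 0$ forces $x\equiv0$.

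The step I expect to be the main obstacle is the bookkeeping in the extremal case $b-a=N-1$, where $\N_a^{b-1}$ supplies exactly $N-1$ zeros -- precisely one short of what the degree count requires -- and the right endpoint value $x(b)$ enters the last available equation $\nabla^N x(b)=0$ together with the unknown values below $a$. In this tightest configuration the triangular elimination stalls and the degree‑$\le N-1$ polynomial is only forced to vanish at $N-1$ points, so the argument above does not close by itself; I would therefore need to check carefully whether the hypotheses in fact secure strictly more than $N-1$ interior zeros (e.g. $b-a\ge N$) or supply an independent relation at $t=b$ that removes the remaining degree of freedom.
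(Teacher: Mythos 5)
Your back-substitution variant is essentially the paper's own proof: the paper expands $\nabla_{a*}^{\nu}$ exactly as in Theorem \ref{uniqueIVPs}, notes that the term $q(t)x(t-1)$ vanishes, uses the unit-diagonal triangular structure to conclude $\nabla^N x(a+k)=0$ for $k\in\N_1^{N-1}$, and then reads these equations for $k=N-1,N-2,\dots,1$ to isolate $x(a-1),x(a-2),\dots,x(a-N+1)$ in turn, finishing with the uniqueness result of Theorem \ref{uniqueIVPs}; your polynomial degree count is a legitimate alternative ending when $b-a\ge N$.

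The important point is that the obstacle you flag at $b-a=N-1$ is not a defect of your write-up: it is a genuine gap in the lemma as stated, which the paper's proof passes over silently. The paper's reduction in \eqref{zerosummations} from $\sum_{i=0}^{N}(-1)^i\binom{N}{i}x(a+k-i)$ to $\sum_{i=k+1}^{N}(-1)^i\binom{N}{i}x(a+k-i)$ requires $x(a)=x(a+1)=\cdots=x(a+k)=0$; for $k=N-1$ this requires $x(a+N-1)=0$, which the hypothesis ($x=0$ on $\N_a^{b-1}$) supplies only when $a+N-1\le b-1$, i.e. $b-a\ge N$. When $b-a=N-1$, the point $a+N-1$ equals $b$, where $x$ is not assumed to vanish, and in fact the conclusion of the lemma is then false. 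Concretely, take $N=2$, any $\nu\in(1,2]$, $b=a+1$, and
\[
x(a-1)=1,\qquad x(a)=0,\qquad x(b)=-1 .
\]
Then $x=0$ on $\N_a^{b-1}=\{a\}$, while $\nabla^2x(a+1)=x(b)-2x(a)+x(a-1)=0$, so $\nabla_{a*}^{\nu}x(a+1)=\nabla_a^{-(2-\nu)}\nabla^2x(a+1)=0$ by \eqref{fracsumform}, and the single equation of \eqref{eqmth} at $t=a+1$ holds for every $q$; yet $x\not\equiv0$. (In general, for $b-a=N-1$ the hypotheses leave the $N$ values $x(a-1),\dots,x(a-N+1),x(b)$ constrained by only the $N-1$ equations $\nabla^N x(a+k)=0$, $k\in\N_1^{N-1}$, so a nontrivial solution always exists.) So your hesitation is exactly right: the lemma needs the strengthened hypothesis $b-a\in\N_N$, under which both your degree count (a polynomial of degree at most $N-1$ with at least $N$ zeros) and the triangular elimination close with no further work; the same adjustment is then needed wherever the lemma is invoked, e.g. in Theorem \ref{higherordersubsituting}.
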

\begin{proof} Using the expanded form of  \eqref{eqmth} given by \eqref{IVPeqnexpanded}, and
assuming $x(t) = 0$ for $t \in \N_a^{b-1}$, it follows by induction on $k \in \N_1^{N-1}$ that
 \begin{equation} \label{zerosummations}
 \Sum_{i=0}^N (-1)^i {N \choose i } x(a+k-i) = \Sum_{i=k+1}^N (-1)^i{N \choose i} x(a+k-i) = 0
 \end{equation} holds for all $k \in \N_1^{N-1}$. 
Letting $k = N-1, N-2, \hdots, 1$ in \eqref{zerosummations} in the given order, we get $x(a-1)=x(a-2)=\cdots = x(a-N+1)=0$, respectively. Additionally, we have $x(a)= 0$, which means by the uniqueness of solutions to IVPs given in Theorem \ref{uniqueIVPs}, $x(t) \equiv 0$ on its entire domain $\N_{a-N+1}^b$.
\end{proof}

The proofs of the next three propositions are straightforward and hence omitted.

 \begin{proposition} \label{funcbds}
Let  $\alpha > -1$ and $s \in \N_{a}$. Then, the following hold:
\begin{enumerate}
 \item If $t \in \N_{\rho(s)}$, then $H_{\alpha}(t, \rho(s)) \geq 0$; if $t \in \N_s$, then $H_{\alpha}(t,\rho(s)) > 0$.

\item If $t \in \N_{\rho(s)}$ and $\alpha > 0$, then $H_{\alpha}(t, \rho(s))$ is a decreasing function of $s$; if $t \in \N_{s}$ and $-1 < \alpha < 0$, then $H_{\alpha}(t, \rho(s))$ is an increasing function of $s$.

 \item  If $t \in \N_{\rho(s)}$ and $\alpha \geq 0$, then $H_{\alpha}(t, \rho(s))$ is a nondecreasing function of $t$; if $\alpha > 0$ and $t \in \N_s$, then $H_{\alpha}(t,\rho(s))$ is an increasing function of $t$.  Also, if $t \in \N_{s+1}$ and $-1 < \alpha < 0$, then $H_{\alpha}(t, \rho(s))$ is a decreasing function of $t$.
 \end{enumerate}

\end{proposition}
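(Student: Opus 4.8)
The plan is to reduce all three statements to a single explicit formula for the monomial and then read off signs and monotonicity. Fix $s\in\N_a$; then $\rho(s)\in\N_a$, and for $t\in\N_{\rho(s)}$ the quantity $m:=t-\rho(s)$ is a nonnegative integer. By Definition~\ref{Taylor} and the rising-function definition,
$$H_{\alpha}(t,\rho(s))=\frac{(t-\rho(s))\rp{\alpha}}{\Gamma(\alpha+1)}=\frac{\Gamma(m+\alpha)}{\Gamma(\alpha+1)\,\Gamma(m)},$$
and repeatedly applying Proposition~\ref{gammaprop} to cancel Gamma functions gives, for $m\ge 1$,
$$H_{\alpha}(t,\rho(s))=\Prod_{j=1}^{m-1}\frac{\alpha+j}{j}=\binom{m-1+\alpha}{m-1},$$
while $H_{\alpha}(t,\rho(s))=0$ when $m=0$ and $\alpha\neq0$ by Theorem~\ref{taylormonprops}, part (1). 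This product representation is the engine for everything that follows.

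For part (1), I would note that since $\alpha>-1$ every factor $\frac{\alpha+j}{j}$ with $j\ge1$ is strictly positive, as $\alpha+j\ge\alpha+1>0$. Hence the product is positive whenever $m\ge1$, and it equals $0$ (or $1$, if $\alpha=0$) when $m=0$. This immediately yields $H_{\alpha}(t,\rho(s))\ge0$ for $t\in\N_{\rho(s)}$, and strict positivity as soon as $m\ge1$, which is exactly the case $t\in\N_s$ (there $m=t-\rho(s)\ge1$).

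For parts (2) and (3), rather than differentiate the closed form it is cleaner to compare consecutive values through two ratios. A short computation with Proposition~\ref{gammaprop} gives, for $s\in\N_{a+1}$,
$$\frac{H_{\alpha}(t,\rho(s+1))}{H_{\alpha}(t,\rho(s))}=\frac{t-s}{t-s+\alpha},\qquad\frac{H_{\alpha}(t+1,\rho(s))}{H_{\alpha}(t,\rho(s))}=\frac{t-\rho(s)+\alpha}{t-\rho(s)}.$$
The direction of monotonicity is then dictated entirely by the sign of $\alpha$: the first ratio lies in $[0,1)$ when $\alpha>0$ and exceeds $1$ when $-1<\alpha<0$, giving the decreasing, respectively increasing, behaviour in $s$; the second ratio is $\ge1$ when $\alpha\ge0$ (strictly $>1$ when $\alpha>0$) and lies in $(0,1)$ when $-1<\alpha<0$, giving the claimed behaviour in $t$. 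The three different domains $\N_{\rho(s)}$, $\N_s$, and $\N_{s+1}$ appearing in the statements are precisely what force $t-s$ (respectively $t-\rho(s)$) into the range on which the relevant ratio sits strictly on the correct side of $1$.

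The main obstacle, modest as it is, is the bookkeeping around the backward jump operator and the degenerate base-point values. Because $\rho(a)=\rho(a+1)=a$, the map $s\mapsto H_{\alpha}(t,\rho(s))$ is flat across the step from $a$ to $a+1$, so the ratio identities must be stated for $s\in\N_{a+1}$ with that first step treated separately; likewise $H_{\alpha}(c,c)=0$ for $\alpha\neq0$ is what makes the first assertion of part (1) only non-strict on $\N_{\rho(s)}$ yet strict on $\N_s$. Accordingly I would carry out each consecutive-value comparison only within the domain named in the corresponding claim, so that no comparison ever straddles the boundary where $t-\rho(s)$ or $t-s$ drops to a value that would reverse the inequality.
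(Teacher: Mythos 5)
The paper contains no proof for you to be measured against: immediately before this proposition it says ``The proofs of the next three propositions are straightforward and hence omitted.'' So the relevant question is simply whether your argument is a valid substitute, and it is. The closed form $H_{\alpha}(t,\rho(s))=\Prod_{j=1}^{m-1}\frac{\alpha+j}{j}$ with $m:=t-\rho(s)$ is correct, both ratio identities follow from Proposition \ref{gammaprop} exactly as you compute them, and reducing each sign/monotonicity claim to the position of the relevant ratio with respect to $1$, with comparisons confined to the stated domains, handles all three parts. One caveat: your parenthetical claim that $t\in\N_s$ forces $m=t-\rho(s)\geq 1$ silently assumes $s\in\N_{a+1}$; for $s=a$ one has $\rho(a)=a$, so $t=s=a$ gives $m=0$ and $H_{\alpha}(a,a)=0$ for $\alpha\neq 0$, and likewise strict monotonicity in $s$ fails across the flat step from $s=a$ to $s=a+1$. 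These are really defects of the proposition as stated (it permits $s=a$) rather than of your proof --- you flag precisely this degeneracy in your closing paragraph --- and in the paper's only application of the strict statements (Lemma \ref{intabsGbounds}) $s$ ranges over $\N_{a+1}^b$, where every step you wrote is airtight. If you wanted to polish the write-up, you would just say explicitly that the strict claims are proved for $s\in\N_{a+1}$ and that the case $s=a$ reduces to $s=a+1$ because $\rho(a)=\rho(a+1)$.
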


\begin{proposition} \label{funcabs} Let $f,g$ be nonnegative real-valued functions on a set $S$. Moreover, assume $f$ and $g$ attain their maximum in $S$. Then, for each fixed $t \in S$, 
\begin{align*}
\left| f(t) - g(t)\right| &\leq \max\{ f(t), g(t)\} \leq \max\{ \max\limits_{t \in S} f(t), \max\limits_{t \in S} g(t) \}.
\end{align*}
\end{proposition}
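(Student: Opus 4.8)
The plan is to prove the two inequalities in the chain separately, since the statement is really two elementary pointwise estimates bridged by the pointwise maximum $\max\{f(t), g(t)\}$. I would fix $t \in S$ throughout and treat $f(t)$ and $g(t)$ as two nonnegative real numbers.

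For the first inequality, $\left| f(t) - g(t)\right| \leq \max\{f(t), g(t)\}$, I would split into the cases $f(t) \geq g(t)$ and $f(t) < g(t)$. Since both sides of the inequality are symmetric under interchanging $f$ and $g$, it suffices to treat the case $f(t) \geq g(t)$. There the absolute value unfolds as $|f(t) - g(t)| = f(t) - g(t)$ and $\max\{f(t), g(t)\} = f(t)$, so the claim reduces to $f(t) - g(t) \leq f(t)$. This holds precisely because $g(t) \geq 0$ by the nonnegativity hypothesis: subtracting a nonnegative quantity can only decrease $f(t)$. The symmetric case $f(t) < g(t)$ uses $f(t) \geq 0$ in the same way.

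For the second inequality, $\max\{f(t), g(t)\} \leq \max\{\max_{s \in S} f(s), \max_{s \in S} g(s)\}$, I would simply invoke the definition of the maximum over $S$: for the fixed $t \in S$ one has $f(t) \leq \max_{s \in S} f(s)$ and $g(t) \leq \max_{s \in S} g(s)$, and hence the larger of $f(t), g(t)$ is bounded above by the larger of the two global maxima. Here the hypothesis that $f$ and $g$ attain their maxima on $S$ is what guarantees the right-hand side is a genuine (finite) maximum rather than merely a supremum, so that the estimate is well posed. Chaining the two displayed bounds then yields the stated result.

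There is essentially no obstacle; the only point deserving explicit care is the role of nonnegativity in the first step, which is genuinely needed and not merely cosmetic. Without $f, g \geq 0$ the first inequality can fail, for instance if $f(t) > 0 > g(t)$, in which case $|f(t) - g(t)| = f(t) + |g(t)| > f(t) = \max\{f(t), g(t)\}$. I would therefore make explicit at which line the assumptions $f(t) \geq 0$ and $g(t) \geq 0$ are invoked, so that the reader sees the hypotheses are used exactly once each and are indispensable.
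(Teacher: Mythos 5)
Your proof is correct: the paper omits this proof as ``straightforward,'' and your two-step argument (case analysis using nonnegativity for $|f(t)-g(t)| \leq \max\{f(t),g(t)\}$, then the definition of the maximum over $S$ for the second bound) is precisely the elementary argument intended. Your remark that nonnegativity is genuinely needed, with the counterexample $f(t) > 0 > g(t)$, is a nice touch that correctly identifies where the hypotheses enter.
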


 \begin{proposition} \label{taylormonineqs}
If $0 < \nu \leq \mu$, then $H_{\nu}(t,a) \leq H_{\mu}(t,a),$ for each fixed $t \in \N_a$.
\end{proposition}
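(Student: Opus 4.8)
The plan is to fix $t \in \N_a$ and view $H_{\nu}(t,a)$ as a function of the order $\nu$ alone, then show this function is nondecreasing on $(0,\infty)$; the asserted inequality follows immediately from $\nu \le \mu$. Write $n := t-a$, which is a nonnegative integer since $t \in \N_a$. The case $n=0$ (that is, $t=a$) is handled directly: by Theorem \ref{taylormonprops}, part (1), we have $H_{\nu}(a,a) = H_{\mu}(a,a) = 0$, so the inequality holds with equality. Thus it suffices to treat $t \in \N_{a+1}$, i.e. $n \ge 1$.

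For $n \ge 1$, I would use Definition \ref{Taylor} together with the definition of the generalized rising function to write
\[ H_{\nu}(t,a) = \frac{(t-a)\rp{\nu}}{\Gamma(\nu+1)} = \frac{\Gamma(n+\nu)}{\Gamma(n)\,\Gamma(\nu+1)}, \]
which is legitimate because $n$ and $n+\nu$ are not nonpositive integers (so the first branch of the generalized rising function applies). The key step is to collapse the Gamma quotient into a finite product: applying Proposition \ref{gammaprop} repeatedly gives $\Gamma(n+\nu) = \prod_{j=1}^{n-1}(j+\nu)\,\Gamma(\nu+1)$, while $\Gamma(n) = (n-1)!$. Cancelling $\Gamma(\nu+1)$ then yields the clean expression
\[ H_{\nu}(t,a) = \frac{1}{(n-1)!}\prod_{j=1}^{n-1}(j+\nu), \]
where the empty product (the case $n=1$) is interpreted as $1$.

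From here the monotonicity is transparent. For $n=1$ we have $H_{\nu}(a+1,a) = 1$ independently of $\nu$, so equality holds. For $n \ge 2$, each factor $j+\nu$ with $j \in \N_1^{n-1}$ is strictly positive and strictly increasing in $\nu$ on $(0,\infty)$; hence their product, and therefore $H_{\nu}(t,a)$, is an increasing function of $\nu$. Consequently $0 < \nu \le \mu$ forces $H_{\nu}(t,a) \le H_{\mu}(t,a)$, for each fixed $t \in \N_a$, as claimed.

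I do not expect a genuine obstacle: the entire content is the reduction of the rising factorial to a product of linear factors in $\nu$, after which monotonicity is elementary. The only points requiring care are the bookkeeping at the boundary values $n=0$ (both monomials vanish) and $n=1$ (both equal $1$), and verifying that the generalized rising function indeed falls into its $\Gamma$-quotient branch for the relevant arguments, so that the rewriting above is valid.
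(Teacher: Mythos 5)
Your proof is correct. Note that the paper itself offers no proof to compare against—it dismisses this proposition (along with Propositions \ref{funcbds} and \ref{funcabs}) as "straightforward and hence omitted"—so your argument simply fills that gap. The reduction of $H_{\nu}(t,a)$ to $\frac{1}{(n-1)!}\prod_{j=1}^{n-1}(j+\nu)$ via repeated use of Proposition \ref{gammaprop}, with the boundary cases $n=0$ and $n=1$ checked separately, is exactly the kind of elementary verification the author had in mind, and every step (the branch check for the generalized rising function, the telescoping of the Gamma quotient, and the monotonicity of a product of positive increasing linear factors) is sound.
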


Throughout this section, we let
\begin{align}\label{A}
A&:= \max\left\{\frac{H_{\gamma-1}(b, a)}{H_1(b,a-1)}H_2(b,a-1), H_{\gamma}(b,a)\right\}.
\end{align}  
   
 \begin{lemma} \label{intabsGbounds}
 Let $s \in \N_{a+1}^b$ and $1 < \gamma \leq 2$.  Then, for $j = 0$ in \eqref{greensnthgenRF},
 \begin{equation} \label{absGbd1}
\left|\int_{a-1}^t G_{\gamma}(\tau, s) \nabla \tau \right| \leq A \text{ and } \left|\int_{t}^b G_{\gamma}(\tau, s) \nabla \tau \right| \leq A,
 \end{equation} 
 and, for $j=1$ in \eqref{greensnthgenRF},
 \begin{equation} \label{absGbd1j1}
\left|\int_{a-1}^t G_{\gamma}(\tau, s) \nabla \tau \right| \leq H_2(b,a-1) \text{ and } 
\left|\int_{t}^b G_{\gamma}(\tau, s) \nabla \tau \right| \leq H_2(b, a-1),
 \end{equation}
where $G_{\gamma}$ is defined by \eqref{greensnthgenRF} with $N=2$.
 \end{lemma}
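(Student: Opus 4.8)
The plan is to compute each of the four integrals in closed form by exploiting the piecewise definition of $G_\gamma$ in \eqref{greensnthgenRF} and then to bound the resulting expressions using the positivity and monotonicity of the Taylor monomials. Throughout I treat $s \in \N_{a+1}^b$ as fixed, recall that $N=2$ so $t$ ranges over $\N_{a-1}^b$, and write the top branch of $G_\gamma$ as $u(\tau,s)$ so that the bottom branch is $u(\tau,s)-H_{\gamma-1}(\tau,\rho(s))$. For each integral I would split the domain at $\tau=\rho(s)$ and apply the antidifference formula of Theorem \ref{taylormonprops}(3), namely $\int_{c}^{t}H_\mu(\tau,c)\nabla\tau = H_{\mu+1}(t,c)$, with base $c=a-1$ and $c=\rho(s)$, together with $H_\mu(c,c)=0$ from Theorem \ref{taylormonprops}(1). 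For $j=0$ this produces, when $t\ge\rho(s)$,
\[
\int_{a-1}^{t} G_\gamma(\tau,s)\,\nabla\tau = \frac{H_{\gamma-1}(b,\rho(s))}{H_1(b,a-1)}H_2(t,a-1) - H_\gamma(t,\rho(s)),
\]
while for $t\le\rho(s)$ only the first term survives; the companion integral $\int_t^b$ is then obtained by subtracting from the $t$-independent total $\int_{a-1}^b G_\gamma$.

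Each integral thus has the form $f-g$ (or simply $f$) with $f,g\ge 0$, so Proposition \ref{funcabs} reduces the task to bounding $f$ and $g$ separately. For the first factor I would note that $H_2(t,a-1)$ is nondecreasing in $t$ by Proposition \ref{funcbds}(3), hence maximized at $t=b$, and that $H_{\gamma-1}(b,\rho(s))$ is decreasing in $s$ by Proposition \ref{funcbds}(2) (here $\gamma-1>0$), hence maximized at $s=a+1$, i.e.\ $\rho(s)=a$. This yields $f\le \frac{H_{\gamma-1}(b,a)}{H_1(b,a-1)}H_2(b,a-1)\le A$. For the subtracted term I would use that $H_\gamma(t,\rho(s))$ is nondecreasing in $t$ and decreasing in $s$, so $g\le H_\gamma(b,a)\le A$. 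Proposition \ref{funcabs} then gives $|f-g|\le\max\{f,g\}\le A$, establishing \eqref{absGbd1}; the estimate for $\int_t^b$ is parallel.

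For $j=1$ the identical computation applies with $H_{\gamma-2}(b,\rho(s))$ and $H_0(b,a-1)\equiv 1$ in place of the corresponding factors. The new ingredient, which I expect to be the main obstacle, is controlling the negative-order monomial $H_{\gamma-2}(b,\rho(s))$: for $1<\gamma<2$ we have $-1<\gamma-2<0$, where the monotonicity in $s$ reverses. By Proposition \ref{funcbds}(1) this quantity is still positive, and by Proposition \ref{funcbds}(2) it is now \emph{increasing} in $s$, hence maximized at $s=b$, where a direct Gamma-function evaluation gives $H_{\gamma-2}(b,b-1)=1$; thus $H_{\gamma-2}(b,\rho(s))\le 1$ for every admissible $s$ (the case $\gamma=2$ being trivial since $H_0\equiv 1$). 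Combined with $H_2(t,a-1)\le H_2(b,a-1)$ this bounds the $f$-term by $H_2(b,a-1)$. To bound the subtracted term by the same quantity I would invoke Proposition \ref{taylormonineqs} (using $\gamma\le 2$) to write $H_\gamma(t,\rho(s))\le H_2(t,\rho(s))$, then use the monotonicity of $H_2$ in $t$ and in its base point, together with $\rho(s)\ge a>a-1$, to conclude $H_2(t,\rho(s))\le H_2(b,a-1)$. A final application of Proposition \ref{funcabs} delivers \eqref{absGbd1j1}, and the argument for $\int_t^b$ is again entirely analogous.
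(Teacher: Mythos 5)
Your proposal is correct and follows essentially the same route as the paper's proof: split the integral at $\tau=\rho(s)$, evaluate it in closed form via Theorem \ref{taylormonprops}, part (3), and bound the two resulting nonnegative terms separately using Propositions \ref{funcbds}, \ref{funcabs}, and \ref{taylormonineqs}, with the $j=1$ case hinging on $H_{\gamma-2}(b,\rho(s))\leq H_{\gamma-2}(b,\rho(b))=1$. The only cosmetic differences are that you obtain $\int_t^b G_\gamma(\tau,s)\nabla\tau$ by subtracting $\int_{a-1}^t$ from the total $\int_{a-1}^b$ rather than computing it directly as the paper does, and that in the $j=1$ case you compare indices first, writing $H_\gamma(t,\rho(s))\le H_2(t,\rho(s))\le H_2(b,a-1)$, whereas the paper bounds $H_\gamma(t,\rho(s))\le H_\gamma(b,a)\le H_2(b,a)\le H_2(b,a-1)$.
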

 \begin{proof}
  For $(t,s) \in \N_{a-1}^b \times \N_{a+1}^b$, by \eqref{greensnthgenRF},
 \begin{align*}
 \int_{a-1}^t G_{\gamma}(\tau,s) \nabla \tau &= \int_{a-1}^{\rho(s)} \frac{H_1(\tau, a-1)H_{\gamma-j-1}(b, \rho(s))}{H_{1-j}(b,a-1)} \nabla \tau\\ &+ \int_{\rho(s)}^t \left[\frac{H_1(\tau, a-1)H_{\gamma-j-1}(b, \rho(s))}{H_{1-j}(b,a-1)}  - H_{\gamma-1}(\tau, \rho(s)) \right] \nabla \tau \\
 &=    \frac{H_{\gamma-j-1}(b, \rho(s))}{H_{1-j}(b,a-1)}\int_{a-1}^{t} H_1(\tau, a-1) \nabla \tau \\
 & - \int_{\rho(s)}^t H_{\gamma-1}(\tau, \rho(s)) \nabla \tau. 
 \end{align*}By Theorem \ref{taylormonprops}, part (3), $ 
 \int_{a-1}^t H_1(\tau, a-1) \nabla \tau =  H_2(t, a-1).$
Next, for $t > \rho(s)$, 
$
\int_{\rho(s)}^t H_{\gamma-1}(\tau, \rho(s)) \nabla \tau 
= H_{\gamma}(t, \rho(s))$. 
Note that if $\rho(s) \geq t$,  $\int_{\rho(s)}^t H_{\gamma-1}(\tau, \rho(s)) \nabla \tau = 0$.
Thus, $
\int_{a-1}^t G_{\gamma}(\tau, s) \nabla \tau = \frac{H_{\gamma-j-1}(b, \rho(s))}{H_{1-j}(b,a-1)}H_2(t,a-1)-H_{\gamma}(t, \rho(s)),
$ so
\begin{equation}\label{intG}
\left|\int_{a-1}^t G_{\gamma}(\tau, s) \nabla \tau \right| = \left|\frac{H_{\gamma-j-1}(b, \rho(s))}{H_{1-j}(b,a-1)}H_2(t,a-1)-H_{\gamma}(t, \rho(s))\right|, 
\end{equation}
for $ t \in \N_{s}^b$, and 
\begin{equation}\label{intG2}
\left| \int_{a-1}^t G_{\gamma}(\tau, s) \nabla \tau \right|= \left|\frac{H_{\gamma-j-1}(b, \rho(s))}{H_{1-j}(b,a-1)}H_2(t,a-1)\right|,
\end{equation} for $t \in \N_{a-1}^{\rho(s)}.$

We will now examine the first term from the right hand side of \eqref{intG} for the case $j=0$. By Proposition \ref{funcbds}, parts (1)-(2), for $s \in \N_{a+1}^b$, 
\begin{equation} \label{ineq1}
0 \leq H_{\gamma-1}(b, \rho(s)) \leq H_{\gamma-1}(b,a),
\end{equation} and, for $t \in \N_{a-1}^b$,
\begin{equation}\label{ineq2}
0 \leq H_{2}(t,a-1) \leq H_2(b, a-1).
\end{equation}  
Hence, by \eqref{ineq1} and \eqref{ineq2},
\begin{equation}\label{term1bds}
0 \leq \frac{H_{\gamma-1}(b, \rho(s))}{H_1(b,a-1)}H_2(t,a-1)  \leq \frac{H_{\gamma-1}(b, a)}{H_1(b,a-1)}H_2(b,a-1).
\end{equation}

Now we consider the second term in \eqref{intG}. By Proposition \ref{funcbds}, it follows that 
\begin{equation}\label{term2bds}
0 \leq H_{\gamma}(t,\rho(s)) \leq H_{\gamma}(b,a).
\end{equation}
From \eqref{intG}, \eqref{intG2}, \eqref{term1bds}, \eqref{term2bds}, and Proposition \ref{funcabs}, for the case $j=0$, we obtain
$
\left|\int_{a-1}^t G_{\gamma}(\tau,s)\nabla \tau \right| \leq \max\left\{\frac{H_{\gamma-1}(b, a)}{H_1(b,a-1)}H_2(b,a-1), H_{\gamma}(b,a)\right\},$
 so the first inequality in \eqref{absGbd1} holds.

Consider the case $j=1$. Then, 
by Proposition \ref{funcbds}, parts (1)-(2), for $s \in \N_{a+1}^b$,
$
0 \leq H_{\gamma-2}(b, \rho(s)) \leq H_{\gamma-2}(b,\rho(b)) = 1.$
Therefore, 
\begin{equation} \label{ineqfocalcase}
0 \leq H_{\gamma-2}(b, \rho(s))H_2(t,a-1)  \leq H_2(b,a-1).
\end{equation}
Then, by \eqref{term2bds}, \eqref{ineqfocalcase}, and Proposition \ref{funcabs}, we obtain
$
\left|\int_{a-1}^t G_{\gamma}(\tau,s)\nabla \tau \right| \leq \max\left\{H_2(b,a-1), H_{\gamma}(b,a)\right\} .
$
Since $ 1<\gamma \leq 2$, we have $H_{\gamma}(b,a) \leq H_{2}(b,a) \leq H_2(b,a-1)$ by Proposition \ref{funcbds}, part (3) and Proposition \ref{taylormonineqs}, so in the case $j=1$, we have
$
\left|\int_{a-1}^t G_{\gamma}(\tau,s)\nabla \tau \right| \leq H_2(b,a-1). 
$
Thus, the first inequality in \eqref{absGbd1j1} holds.

Similarly, from \eqref{greensnthgenRF}, for $(t,s) \in \N_{a-1}^b \times \N_{a+1}^b$,  $\int_{t}^b G_{\gamma}(\tau,s) \nabla \tau =$
 \begin{align*}
 \frac{H_{\gamma-j-1}(b, \rho(s))}{H_{1-j}(b,a-1)} \int_{t}^{b}  H_{1}(\tau, a-1)\nabla \tau - \int_{\rho(s)}^b  H_{\gamma-1}(\tau, \rho(s))\nabla \tau . 
  \end{align*} Using arguments similar to the above, we obtain the second inequalities in \eqref{absGbd1} and \eqref{absGbd1j1}. \end{proof}

\begin{theorem} \label{higherordersubsituting} Let $q: \N_{a+1}^b \to \R$,  $\nu > 2$, and $N:= \Ceil{\nu}$. Assume $b-a \in \N_{N-1}$, and consider the BVP \eqref{eqmth},
\begin{equation} \label{eqmthbcs1}
\nabla^{N-2}x(a-1) = 0, \,\ \nabla^{N-2}x(b) = 0, \,\ \nabla^i x(c_i) = 0 \text{ for } i \in \N_0^{N-3},
\end{equation} 
where $c_i \in \{ a-1, b\}$, for $i \in \N_0^{N-3}$. 
Let $A$ be as defined in \eqref{A} with $\gamma = \nu-N+2$.
If the boundary value problem \eqref{eqmth}, \eqref{eqmthbcs1} has a nontrivial solution  $x: \N_{a-N+1}^b \to \R$, then 
\[ \int_a^b |q(s)| \nabla s \geq \frac{1}{A} \cdot \frac{1}{(b-a+1)^{N-2}}. \]

Furthermore, consider the BVP \eqref{eqmth}, 
\begin{equation} \label{eqmthbcs2}
\nabla^{N-2}x(a-1) = 0, \,\ \nabla^{N-1}x(b) = 0, \,\  \nabla^i x(c_i) = 0 \text{ for } i \in \N_0^{N-3},
\end{equation} 
where $c_i \in \{ a-1, b\}$, for $i \in \N_0^{N-3}$. 
If the boundary value problem \eqref{eqmth}, \eqref{eqmthbcs2} has a nontrivial solution $x: \N_{a-N+1}^b \to \R$, then 
\[ \int_a^b |q(s)| \nabla s \geq \frac{1}{H_2(b, a-1)} \cdot \frac{1}{(b-a+1)^{N-2}}.\]
\end{theorem}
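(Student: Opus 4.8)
The plan is to use the reduction-of-order technique of \cite{dk2}, reducing the order-$\nu$ problem to a second-order (in the fractional sense) problem for which the Green's function $G_\gamma$ of Theorem \ref{nthordersolngenRF} and the bounds of Lemma \ref{intabsGbounds} are available. Set $\gamma := \nu - N + 2$, so that $1 < \gamma \leq 2$ and $\Ceil{\gamma} = 2$, and put $y(t) := \nabla^{N-2} x(t)$, which is defined on $\N_{a-1}^b$. Since $N - \nu = 2 - \gamma$ and $\nabla^N x = \nabla^2 y$, the definition \eqref{fracdifform} of the nabla Caputo difference gives the key identity $\nabla_{a*}^{\nu} x(t) = \nabla_a^{-(N-\nu)}\nabla^N x(t) = \nabla_a^{-(2-\gamma)}\nabla^2 y(t) = \nabla_{a*}^{\gamma} y(t)$ for $t \in \N_{a+1}^b$. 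Hence \eqref{eqmth} becomes $\nabla_{a*}^{\gamma} y(t) = -q(t) x(t-1)$ on $\N_{a+1}^b$, a second-order nabla Caputo equation forced by $-q(t)x(t-1)$.

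Next I would translate the boundary conditions to $y$. In the first case, $\nabla^{N-2}x(a-1)=0$ and $\nabla^{N-2}x(b)=0$ become $y(a-1)=y(b)=0$, which are exactly the $(1,1)$ conjugate conditions of \eqref{eq2nthgenRF} with $N=2$ and $j=0$; in the second case, $\nabla^{N-1}x(b)=\nabla y(b)=0$ gives the right-focal conditions $y(a-1)=0$, $\nabla y(b)=0$, i.e.\ $j=1$. The hypotheses $1<\gamma\le 2$ and $b-a\in\N_{N-1}\subseteq\N_{\max\{1,j\}}$ let me invoke Theorem \ref{nthordersolngenRF} to obtain the representation $y(t) = \int_a^b G_{\gamma}(t,s)\, q(s)\, x(s-1)\,\nabla s$, where $G_\gamma$ is \eqref{greensnthgenRF} with $N=2$ and the appropriate $j$. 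I then recover $x$ from $y$ by applying $N-2$ nabla antidifferences: using the Fundamental Theorem of Nabla Calculus (Theorem \ref{ftnc}) together with $\nabla^i x(c_i)=0$, I have $\nabla^{i} x(t) = \int_{c_i}^{t}\nabla^{i+1}x(\tau)\,\nabla\tau$ for each $i\in\N_0^{N-3}$. Iterating from $i=N-3$ down to $i=0$, substituting the $G_\gamma$-representation of $y=\nabla^{N-2}x$, and interchanging the (finite) orders of summation, I can write $x(t) = \int_a^b K(t,s)\, q(s)\, x(s-1)\,\nabla s$, where $K(t,s)$ is an $(N-2)$-fold iterated nabla integral of $G_\gamma(\cdot,s)$ against the limits $c_0,\dots,c_{N-3}$.

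The estimate then comes from bounding $K$. The innermost integral is $\int_{c_{N-3}}^{r} G_{\gamma}(\sigma,s)\,\nabla\sigma$, and since $c_{N-3}\in\{a-1,b\}$ and $\int_{b}^{r}=-\int_{r}^{b}$, Lemma \ref{intabsGbounds} bounds its absolute value by $A$ in the first case (by $H_2(b,a-1)$ in the second), uniformly over both admissible choices. Each of the remaining $N-3$ integrations is of a function bounded by a constant over an interval of length at most $b-(a-1)=b-a+1$, so it multiplies the bound by at most $b-a+1$; hence $|K(t,s)|\le A\,(b-a+1)^{N-3}\le A\,(b-a+1)^{N-2}$ in the first case, and similarly with $H_2(b,a-1)$ in the second. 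Finally, letting $M:=\max_{t\in\N_a^{b-1}}|x(t)|$, which is strictly positive by Lemma \ref{higherordersubx(s-1)issue} because $x$ is nontrivial, I obtain $|x(t)|\le A\,(b-a+1)^{N-2}\, M \int_a^b |q(s)|\,\nabla s$ for every $t\in\N_a^{b-1}$; taking the maximum over such $t$ and dividing by $M$ yields $\int_a^b|q(s)|\,\nabla s \ge \frac{1}{A}\cdot\frac{1}{(b-a+1)^{N-2}}$, and the same argument with $A$ replaced by $H_2(b,a-1)$ gives the second inequality.

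I expect the main obstacle to be the reduction identity $\nabla_{a*}^{\nu}x = \nabla_{a*}^{\gamma}(\nabla^{N-2}x)$ together with the bookkeeping of the iterated integration: one must interchange the nested finite summations, keep the single Green's-function bound separate from the $N-3$ elementary integrations, and verify that the interval-length bound $b-a+1$ and the sign handling hold uniformly across the two choices $c_i\in\{a-1,b\}$. The fractional reduction is the conceptual heart, since it is what makes the second-order Green's function and Lemma \ref{intabsGbounds} applicable, whereas the iterated-integral estimate is the main computational step.
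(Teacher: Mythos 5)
Your proposal is correct and follows essentially the same route as the paper's own proof: the identical reduction $y=\nabla^{N-2}x$ with $\gamma=\nu-N+2$, the Green's-function representation from Theorem \ref{nthordersolngenRF} (with $j=0$ and $j=1$ for the two boundary-condition sets), the bounds of Lemma \ref{intabsGbounds}, and Lemma \ref{higherordersubx(s-1)issue} to ensure the maximum of $|x|$ is nonzero before dividing by it. The only differences are bookkeeping: you treat the choices $c_i\in\{a-1,b\}$ uniformly where the paper handles the first antidifference in both cases and then takes the remaining conditions at $a-1$ without loss of generality, and your careful count actually yields the slightly stronger exponent $(b-a+1)^{N-3}$, which you then legitimately weaken to the stated $(b-a+1)^{N-2}$.
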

\begin{proof}We will give the proof for the case of the BVP \eqref{eqmth}, \eqref{eqmthbcs1}. The proof is similar for the BVP \eqref{eqmth}, \eqref{eqmthbcs2}.
First, note that for $t \in \N_{a+1}$,
\begin{align*}
\nabla_{a*}^{\nu} x(t) &\overset{\eqref{fracdifform}}{=} \nabla_{a}^{-(N-\nu)}\nabla^N x(t) \\
&=\nabla_a^{-(N-\nu)}\nabla^2\nabla^{N-2}x(t) \\
&= \nabla_a^{-(2-(\nu-N+2))}\nabla^2\nabla^{N-2} x(t)\\
&= \nabla_{a*}^{\nu-N+2}\nabla^{N-2} x(t).
\end{align*} Hence, we can rewrite \eqref{eqmth} as
\begin{equation} \label{eqmthrewrite}
\nabla_{a*}^{\nu-N+2}\nabla^{N-2} x(t) + q(t) x(t-1) = 0, \,\ t \in \N_{a+1}^b.
\end{equation}
Let $y(t):= \nabla^{N-2} x(t)$ for $t \in \N_{a-N+3}^b$. Then, $y(t)$ solves the BVP
\begin{equation} \label{reducedtoconj}
\begin{cases}
-\nabla_{a*}^{\nu-N+2} y(t) = q(t) x(t-1), & t \in \N_{a+1}^b \\
y(a-1) = y(b) = 0.
\end{cases}
\end{equation}
By  Theorem \ref{nthordersolngenRF}, we have
$y(t) = \int_a^b G_{\nu-N+2}(t,s)q(s)x(s-1) \nabla s,$
for $t \in \N_{a-1}^b$,
where $G_{\nu-N+2}$ is given by \eqref{greensnthgenRF}  with $j= 0$.
Since $\nabla^{N-2} x(t) = y(t)$, we have
\begin{equation} \label{nablasquared}
\nabla^{N-2} x(t) = \int_a^b G_{\nu-N+2}(t,s)q(s)x(s-1) \nabla s.
\end{equation}

Applying Theorem \ref{ftnc}, with the appropriate boundary condition \\ $\nabla^{N-3} x(a-1) =0$ or $\nabla^{N-3} x(b) = 0$ given by \eqref{eqmthbcs1}, for $t \in \N_{a-1}^b$, we get either
\begin{align} \label{bcata-1}
\nabla^{N-3} x(t) &= \nabla^{N-3} x(t) - \nabla^{N-3} x(a-1) \nonumber\\
&\overset{\eqref{nablasquared}}{=} \int\limits_{a-1}^t \int\limits_a^b G_{\nu-N+2}(\tau, s) q(s) x(s-1) \nabla s \nabla \tau\nonumber \\
&= \int\limits_a^b q(s) x(s-1) \left(  \hspace{3pt} \int\limits_{a-1}^t G_{\nu-N+2}(\tau, s) \nabla \tau \right) \nabla s,
\end{align}
or \begin{align}\label{bcatb}
-\nabla^{N-3} x(t) &= \nabla^{N-3}x(b) -\nabla^{N-3} x(t)\nonumber  \\
&=\int\limits_a^b q(s) x(s-1) \left(  \int\limits_t^b G_{\nu-N+2}(\tau, s) \nabla \tau \right) \nabla s,
\end{align}
respectively, where we have interchanged the order of integration by using the linearity of the nabla integral. 
Let $F(t_0) :=\int_{a-1}^{t_0} G_{\nu-N+2}(\tau, s) \nabla \tau$ for the case \eqref{bcata-1} or $F(t_0):= \int_{t_0}^b G_{\nu-N+2}(\tau, s) \nabla \tau $ for the case \eqref{bcatb}. Assume, without loss of generality, that the remaining boundary conditions in \eqref{eqmthbcs1} are $\nabla^i x(a-1)=0$, for $i \in \N_0^{N-4}$.  Then, assuming $N \geq 4$, integrating either \eqref{bcata-1} or \eqref{bcatb} $N-3$ times and then taking the absolute value of both sides, we get in either case
\begin{align*} 
|x(t)| &= \left| \hspace{3pt} \int\limits_{a-1}^t \cdots \int\limits_{a-1}^t \int\limits_{a-1}^t \int\limits_a^b q(s) x(s-1) F(t_0) \nabla s \nabla t_0 \nabla t_1 \nabla t_2 \cdots \nabla t_{N-3} \right| \\
&\leq  \int\limits_{a-1}^t \cdots \int\limits_{a-1}^t \int\limits_{a-1}^t \int\limits_a^b |q(s)| |x(s-1)|\left|F(t_0)\right| \nabla s \nabla t_0 \nabla t_1 \nabla t_2 \cdots \nabla t_{N-3} \\
& \leq  \int\limits_{a-1}^b \cdots \int\limits_{a-1}^b \int\limits_{a-1}^b \int\limits_a^b |q(s)| |x(s-1)| \left|F(t_0)\right| \nabla s \nabla t_0 \nabla t_1 \nabla t_2 \cdots \nabla t_{N-3},
\end{align*}
for $t \in \N_{a-1}^b$.
Let $t = t'$ such that $x(t') = \max\limits_{t \in \N_{a-1}^b} |x(t)|$ and define $B:= x(t')$. By Lemma \ref{higherordersubx(s-1)issue}, since by  hypothesis $x$ is a nontrivial solution, we have $x(t) \not \equiv 0$ on $\N_a^{b-1}$. In particular, $B \not = 0$. Letting $t = t'$ in the last inequality, we get
\begin{align*}
|x(t')| 
& \leq \int\limits_{a-1}^b \cdots \int\limits_{a-1}^b \int\limits_{a-1}^b \int\limits_a^b |q(s)| |x(s-1)| \left|F(t_0)\right| \nabla s \nabla t_0 \nabla t_1 \nabla t_2 \cdots \nabla t_{N-3} \\
& \leq \int\limits_{a-1}^b \cdots \int\limits_{a-1}^b \int\limits_{a-1}^b \int\limits_a^b |q(s)| B \left|F(t_0)\right| \nabla s \nabla t_0 \nabla t_1 \nabla t_2 \cdots \nabla t_{N-3},  
\end{align*}
so 
$B \leq \int\limits_{a-1}^b \cdots \int\limits_{a-1}^b \int\limits_{a-1}^b \int\limits_a^b |q(s)| B \left|F(t_0)\right| \nabla s \nabla t_0\nabla t_1 \nabla t_2 \cdots \nabla t_{N-3}.$
 By Lemma \ref{intabsGbounds}, $|F(t_0)| \leq A$, where $A$ is defined by \eqref{A} with $\gamma = \nu - N +2$. Therefore, we get
$
1 \leq  \int\limits_{a-1}^b \cdots \int\limits_{a-1}^b \int\limits_{a-1}^b \int\limits_a^b |q(s)| A   \nabla s \nabla t_0 \nabla t_1 \nabla t_2 \cdots \nabla t_{N-3}.
$ It follows that
$ \frac{1}{A(b-a+1)^{N-2}}  \leq \int_a^b |q(s)| \nabla s. $

Note that for the proof involving the boundary conditions \eqref{eqmthbcs2}, we apply the bound $|F(t_0)| \leq H_2(t,a-1)$ given in Lemma \ref{intabsGbounds}.
\end{proof}

The proof of the next theorem is similar to the proof of Theorem \ref{existenceuniquenessbvpkNk}.

\begin{theorem} \label{nonhomsubstitution1} Let $q: \N_{a+1}^b \to \R$, $\nu > 2$, $N:= \Ceil{\nu}$, $b-a \in \N_{N-1}$, and $r \in \{1, 2\}$ be fixed. If the homogeneous BVP 
\begin{equation*} 
\begin{cases}
\nabla_{a*}^{\nu}x(t) + q(t) x(t-1) = 0, \,\ t \in \N_{a+1}^b \\
\nabla^{N-2}x(a-1) = 0, \,\ \nabla^{N-r}x(b) = 0, \,\ \nabla^i x(c_i) = 0, \text{ for } i \in \N_0^{N-3},
\end{cases}
\end{equation*} 
where $c_i \in \{ a-1, b\}$, for $i \in \N_0^{N-3}$, has only the trivial solution, then the nonhomogenous BVP
\begin{equation} \label{nonhomBVPhigherorder1}
\begin{cases} 
\nabla_{a*}^{\nu}x(t) + q(t) x(t-1) = f(t), \,\ t \in \N_{a+1}^b \\
\nabla^{N-2}x(a-1) = A_0, \,\ \nabla^{N-r}x(b) = B_0, \,\ \nabla^i x(c_i) = C_i, \text{ for } i \in \N_0^{N-3},\end{cases}
\end{equation}
where $f: \N_{a+1}^b \to \R$ and $A_0,$ $B_0,$ $C_i \in \R$ for $i \in \N_0^{N-3}$,
has a unique solution defined on $\N_{a-N+1}^b$.
\end{theorem}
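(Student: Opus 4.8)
The plan is to reproduce the linear-algebra skeleton of the proof of Theorem \ref{existenceuniquenessbvpkNk}, with one essential substitution: because the perturbation $q(t)x(t-1)$ destroys the explicit Taylor-monomial description of the solution space furnished by Theorem \ref{caputogensolntaylor}, I would obtain and manipulate a basis of the solution space abstractly, via the initial value problem existence-uniqueness result Theorem \ref{uniqueIVPs}. First observe that the operator $x \mapsto \nabla_{a*}^{\nu} x(t) + q(t)x(t-1)$ is linear in $x$. Extending $q$ and $f$ arbitrarily from $\N_{a+1}^b$ to $\N_{a+1}$ so that Theorem \ref{uniqueIVPs} applies and then restricting to $\N_{a-N+1}^b$, we see that for each tuple of initial data $(A_0,\dots,A_{N-1}) \in \R^N$ (with $x(a-i)=A_i$) there is exactly one solution of the homogeneous equation $\nabla_{a*}^{\nu} x(t) + q(t)x(t-1)=0$. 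Hence the map carrying a homogeneous solution to its tuple of initial values is a linear bijection onto $\R^N$, so the homogeneous solution space is $N$-dimensional. I would fix the basis $\{\phi_0,\dots,\phi_{N-1}\}$ in which $\phi_i$ is the unique homogeneous solution whose initial data is the $i$-th standard basis vector, that is, $\phi_i(a-m)$ equals $1$ when $m=i$ and $0$ otherwise; these are linearly independent precisely because their initial-data tuples are.

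Next, again by Theorem \ref{uniqueIVPs}, let $x_p$ be the unique solution of the full equation $\nabla_{a*}^{\nu} x(t) + q(t)x(t-1)=f(t)$ with zero initial data. By linearity the general solution of this nonhomogeneous equation on $\N_{a-N+1}^b$ is
\[ x(t) = x_p(t) + \sum_{i=0}^{N-1} d_i \phi_i(t), \qquad d_0,\dots,d_{N-1} \in \R. \]
Each of the $N$ boundary conditions in \eqref{nonhomBVPhigherorder1} — namely $\nabla^{N-2}x(a-1)$, $\nabla^{N-r}x(b)$, and $\nabla^i x(c_i)$ for $i \in \N_0^{N-3}$ — is a linear functional of $x$, well defined since $x$ is specified on all of $\N_{a-N+1}^b$. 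Imposing these conditions on the general solution turns the BVP into the square linear system $M\mathbf{d} = \mathbf{b}$, where the entries of the $N \times N$ matrix $M$ are the boundary functionals applied to the $\phi_i$, and the components of $\mathbf{b}$ are the prescribed data $A_0, B_0, C_i$ minus the corresponding functional applied to $x_p$.

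Finally, I would identify the solvability condition with the hypothesis. A solution of the homogeneous BVP is exactly a combination $\sum_i d_i \phi_i$ annihilated by every boundary functional, i.e.\ a vector $\mathbf{d}$ with $M\mathbf{d}=\mathbf{0}$; by linear independence of the $\phi_i$ such a solution is trivial if and only if $\mathbf{d}=\mathbf{0}$. Thus the assumption that the homogeneous BVP has only the trivial solution is equivalent to $\det M \neq 0$. Consequently $M\mathbf{d}=\mathbf{b}$ has the unique solution $\mathbf{d}=M^{-1}\mathbf{b}$ for every choice of $A_0, B_0, C_i$, which yields a unique solution of the nonhomogeneous BVP \eqref{nonhomBVPhigherorder1} on $\N_{a-N+1}^b$. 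The only real work beyond the proof of Theorem \ref{existenceuniquenessbvpkNk} is this replacement of the explicit solution basis by the abstract one supplied by Theorem \ref{uniqueIVPs}; after that the determinant argument is unchanged, so I anticipate no serious obstacle.
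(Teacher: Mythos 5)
Your proposal is correct and follows essentially the approach the paper intends: the paper omits the proof, stating only that it is ``similar to the proof of Theorem \ref{existenceuniquenessbvpkNk}'', and your argument reproduces exactly that Fredholm-alternative skeleton (general solution, boundary functionals, $N\times N$ system, $\det M \neq 0$ equivalent to triviality of the homogeneous BVP). Your one deviation --- replacing the explicit Taylor-monomial basis of Theorem \ref{caputogensolntaylor}, which indeed no longer applies once the term $q(t)x(t-1)$ is present, by the abstract $N$-dimensional solution basis furnished by Theorem \ref{uniqueIVPs} --- is precisely the adaptation the word ``similar'' glosses over, and you carry it out correctly.
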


Using the Lyapunov inequalities in Theorem \ref{higherordersubsituting} along with Theorem \ref{nonhomsubstitution1}, we get the following corollary.

\begin{corollary}  Let $q: \N_{a+1}^b \to \R$, $\nu > 2$, $N:= \Ceil{\nu}$, and $r \in \{1, 2\}$ be fixed. Assume $b-a \in \N_{N-1}$.  Consider the nonhomogeneous boundary value problem 
\eqref{nonhomBVPhigherorder1}.
\begin{enumerate}
 \item If $q(t)$ satisfies 
$ \int_a^b |q(t)| \nabla t < \frac{1}{A} \cdot \frac{1}{(b-a+1)^{N-2}}, $
then the BVP \eqref{nonhomBVPhigherorder1} with $r = 2$ has a unique solution defined on $\N_{a-N+1}^b$.

\item If $q(t)$ satisfies 
$\int_a^b |q(t)| \nabla t < \frac{1}{H_2(b, a-1)} \cdot \frac{1}{(b-a+1)^{N-2}}, $
then the BVP \eqref{nonhomBVPhigherorder1} with $r = 1$ has a unique solution defined on $\N_{a-N+1}^b$.
\end{enumerate}
\end{corollary}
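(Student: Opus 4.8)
The plan is to apply the contrapositive of the Lyapunov inequalities from Theorem \ref{higherordersubsituting} in tandem with the existence-uniqueness dichotomy established in Theorem \ref{nonhomsubstitution1}. The logical skeleton is short: Theorem \ref{nonhomsubstitution1} asserts that the nonhomogeneous BVP \eqref{nonhomBVPhigherorder1} has a unique solution precisely when the associated homogeneous BVP has only the trivial solution. So my entire task reduces to showing that, under the smallness hypothesis on $\int_a^b |q(t)| \nabla t$, the relevant homogeneous BVP admits no nontrivial solution. This is exactly where Theorem \ref{higherordersubsituting} enters, read in its contrapositive form.

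For part (1), I would argue as follows. Suppose, for contradiction, that the homogeneous BVP with $r = 2$ (that is, the boundary conditions \eqref{eqmthbcs1}, with $\nabla^{N-2}x(b) = 0$) had a nontrivial solution $x:\N_{a-N+1}^b \to \R$. Then Theorem \ref{higherordersubsituting}, applied to the first set of boundary conditions \eqref{eqmthbcs1}, would force
\[
\int_a^b |q(s)| \nabla s \geq \frac{1}{A} \cdot \frac{1}{(b-a+1)^{N-2}}.
\]
But this directly contradicts the hypothesis $\int_a^b |q(t)| \nabla t < \frac{1}{A} \cdot \frac{1}{(b-a+1)^{N-2}}$. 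Hence the homogeneous BVP has only the trivial solution, and Theorem \ref{nonhomsubstitution1} (with $r = 2$) yields the desired unique solvability of \eqref{nonhomBVPhigherorder1} on $\N_{a-N+1}^b$.

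For part (2), the argument is identical in structure, now invoking the second conclusion of Theorem \ref{higherordersubsituting}, which corresponds to the boundary conditions \eqref{eqmthbcs2} (with $\nabla^{N-1}x(b) = 0$, i.e.\ the case $r = 1$) and delivers the sharper constant $H_2(b,a-1)$ in place of $A$. A nontrivial solution of the homogeneous problem would force $\int_a^b |q(s)| \nabla s \geq \frac{1}{H_2(b,a-1)} \cdot \frac{1}{(b-a+1)^{N-2}}$, again contradicting the stated hypothesis; so uniqueness of the trivial solution holds, and Theorem \ref{nonhomsubstitution1} with $r=1$ closes the case.

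I do not anticipate a genuine technical obstacle here, since both ingredients are already proved; the only point requiring care is the bookkeeping of which boundary condition set and which Lyapunov constant pairs with each value of $r$. Specifically, I must be sure that $r=2$ in \eqref{nonhomBVPhigherorder1} matches the condition $\nabla^{N-2}x(b)=0$ governed by constant $A$, while $r=1$ matches $\nabla^{N-1}x(b)=0$ governed by $H_2(b,a-1)$, so that the inequality signs in the contrapositive line up correctly with the two conclusions of Theorem \ref{higherordersubsituting}. Once that correspondence is fixed, each part is a one-line contradiction argument.
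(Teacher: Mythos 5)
Your proposal is correct and follows exactly the route the paper intends: the paper states the corollary as an immediate consequence of Theorems \ref{higherordersubsituting} and \ref{nonhomsubstitution1}, which is precisely your contrapositive argument (smallness of $\int_a^b |q(t)|\nabla t$ rules out nontrivial solutions of the homogeneous problem, so the Fredholm-alternative-type Theorem \ref{nonhomsubstitution1} gives unique solvability of \eqref{nonhomBVPhigherorder1}). Your bookkeeping is also right: $r=2$ pairs with the conditions \eqref{eqmthbcs1} and the constant $A$, while $r=1$ pairs with \eqref{eqmthbcs2} and the constant $H_2(b,a-1)$.
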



\bibliographystyle{tfs}
\bibliography{dissertation}

\begin{thebibliography}{10}
\providecommand{\MR}{\relax\unskip\space MR }
\providecommand{\url}[1]{\normalfont{#1}}
\providecommand{\urlprefix}{Available at }

\bibitem{rightfi}
T. Abdeljawad, Q.M. Al-Mdallal, and M.A. Hajji, \emph{Arbitrary order
  fractional difference operators with discrete exponential kernels and
  applications}, Discrete Dynamics in Nature and Society 2017 (2017).
  \urlprefix\url{https://doi.org/10.1155/2017/4149320}.

\bibitem{fahri}
M.F. Aktas and  D.Cakmak, \emph{Lyapunov-type inequalities for third-order
  linear differential equations}, Electronic Journal of Differential Equations
  2017 no. 139 (2017), pp. 1--14.

\bibitem{greens}
A. Cabada, J.A. Cid, and B. M\'aquez-Villamarín, \emph{Computation of
  {G}reen's functions for boundary value problems with {M}athematica}, Applied
  Mathematics and Computation 219 no. 4 (2012), pp. 1919--1936.

\bibitem{dk1}
S. Dhar and Q. Kong, \emph{Fractional {L}yapunov-type inequalities with mixed
  boundary conditions on univariate and multivariate domains}, Journal of
  Mathematical Inequalities (Submitted)  (2017).

\bibitem{dk2}
S. Dhar and Q. Kong, \emph{Lyapunov-type inequalities for $\alpha$-th order
  fractional differential equations with $2 < \alpha \leq 3$ and fractional
  boundary conditions}, Electronic Journal of Differential Equations 2017 no.
  203 (2017), pp. 1--15.

\bibitem{f1}
R.A. Ferreira, \emph{On a {L}yapunov-type inequality and the zeros of a certain
  {M}ittag-{L}effler function}, Journal of Mathematical Analysis and
  Applications 412 no. 2 (2013), pp. 1058--1063.

\bibitem{f2}
R.A. Ferreira, \emph{Some discrete fractional {L}yapunov-type inequalities},
  Fractional Differential Calculus 5 no. 1 (2015), pp. 87--92.

\bibitem{sequentialFerr}
R.A. Ferreira, \emph{Lyapunov-type inequalities for some sequential fractional
  boundary value problems}, Advances in Dynamical Systems and Applications 11
  no. 1 (2016), pp. 33--43.

\bibitem{deltaLyap}
K. Ghanbari and Y. Gholami, \emph{New classes of {L}yapunov type inequalities
  of fractional {$\Delta$}-difference {S}turm-{L}iouville problems with
  applications}, Bulletin of the Iranian Mathematical Society 43 no. 2 (2017),
  pp. 385--408.

\bibitem{jul}
J.S. Goar, \emph{A {C}aputo boundary value problem in nabla fractional
  calculus}, Ph.D. diss., University of Nebraska-Lincoln,  2016.

\bibitem{gp}
C. Goodrich and A.C. Peterson, \emph{Discrete Fractional Calculus}, Springer,
  2015.

\bibitem{jrs}
M. Jleli, L. Ragoub, and B. Samet, \emph{A {L}yapunov-type inequality for a
  fractional differential equation under a {R}obin boundary condition}, Journal
  of Function Spaces 2015 (2015).
  \urlprefix\url{https://doi.org/10.1155/2015/468536}.

\bibitem{js}
M. Jleli and B. Samet, \emph{Lyapunov-type inequalities for fractional
  boundary-value problems}, Electronic Journal of Differential Equations 2015
  no. 88 (2015), pp. 1--11.

\bibitem{kp}
W.G. Kelley and A.C. Peterson, \emph{The Theory of Differential Equations},
  Springer (2010).

\bibitem{l}
A.M. Liapunov, \emph{Probleme general de la stabilite du mouvement}, Annals of
  Mathematical Studies 17 (1947), pp. 203--474.

\bibitem{ma}
D. Ma, \emph{A generalized {L}yapunov inequality for a higher order fractional
  boundary value problem}, Journal of Inequalities and Applications 2016 no.
  261 (2016). \urlprefix\url{https://doi.org/10.1186/s13660-016-1199-5}.

\bibitem{vandermonde}
L. Mirsky, \emph{An introduction to linear algebra}, Dover Publications, Inc.,
  1955.

\bibitem{pb}
A.C. Peterson and M. Bohner, \emph{Dynamic Equations on Time Scales},
  Birkhauser, 2001.

\end{thebibliography}
\bigskip

\appendix

\end{document}